\documentclass[10pt,reqno]{amsart}

\usepackage{amsmath,amssymb,mathtools}
\usepackage{microtype}
\usepackage[hidelinks]{hyperref}

\newtheorem{theorem}{Theorem}[section]
\newtheorem{proposition}[theorem]{Proposition}
\newtheorem{lemma}[theorem]{Lemma}
\newtheorem{corollary}[theorem]{Corollary}
\theoremstyle{remark}
\newtheorem{remark}[theorem]{Remark}
\numberwithin{equation}{section}

\newcommand{\C}{\mathbb C}
\newcommand{\Q}{\mathbb Q}
\newcommand{\Z}{\mathbb Z}
\newcommand{\ip}[2]{\left\langle #1,#2\right\rangle}
\newcommand{\norm}[1]{\left\lVert #1\right\rVert}

\title[Explicit full-spark Gabor windows]
{Explicit Full-Spark Gabor Windows with Quantitative Phase Retrieval}

\author{Dongwei Li}
\address{School of Mathematics, Hefei University of Technology, Hefei, China}
\email{dongweili@hfut.edu.cn}

\subjclass[2020]{Primary 42C15; Secondary 42A38, 94A12}
\keywords{finite Gabor systems, full spark, phase retrieval, ambiguity function, algebraic regularization, Weyl--Heisenberg measurements}

\begin{document}
\raggedbottom

\begin{abstract}
We construct explicit algebraic finite Gabor windows that are simultaneously
full spark and phase retrievable, with quantitative control of the
self-ambiguity function.  In every cyclic dimension, a constant--Chu window
has an exactly computable ambiguity minimum of order $N^{-3/2}$; for
$N\ge25$, a two-site Chu repair has margin comparable to $dN^{-3/2}$ for
every divisor $d\le\sqrt N$.  A quantitative algebraic regularization,
based on the high-degree specialization principle used in explicit full-spark
Gabor constructions, imposes full spark while retaining a fixed proportion
of a seed's ambiguity margin.  These ingredients give explicit simultaneous
constructions in every cyclic dimension and a factor-sensitive improvement
when $N$ has a divisor near $\sqrt N$.  Further consequences include exact Chinese-remainder tensorization,
optimal-order squarefree families, near-SIC prime regularization, lifted
stability, and a finite cyclic Schr\"odinger construction.
\end{abstract}

\maketitle
\enlargethispage{3pt}

\section{Introduction}

Let $N\ge2$, write $\omega_N=e^{2\pi i/N}$, and define the cyclic
translation and modulation operators on $\C^N$ by
\[
 (T_kf)(j)=f(j-k),\qquad (M_\ell f)(j)=\omega_N^{\ell j}f(j),
 \qquad j,k,\ell\in\Z_N.
\]
The finite Gabor orbit generated by $g\in\C^N$ is
\[
 \mathcal G(g)=\{M_\ell T_kg:(k,\ell)\in\Z_N^2\}.
\]
It is \emph{full spark} if every $N$-element subfamily of its $N^2$ vectors
forms a basis of $\C^N$.  It performs \emph{phase retrieval} if the intensities
\[
 x\longmapsto
 \bigl(|\ip{x}{M_\ell T_kg}|^2\bigr)_{k,\ell}
\]
determine $x$ up to a unimodular scalar.

The two conditions come from different parts of finite frame theory.
Full spark is the maximal linear-independence condition for a finite frame and
is closely tied to robustness under erasures; see, for example,
\cite{Li2026} for recent robustness characterizations and deterministic
full-spark constructions.  In the structured Gabor setting,
prime-dimensional linear-independence results go back to
Lawrence--Pfander--Walnut \cite{LawrencePfanderWalnut2005}, and Malikiosis
proved the existence of full-spark cyclic Gabor windows in every finite
dimension and supplied explicit algebraic constructions
\cite{Malikiosis2015}.  Bojarovska and Flinth showed that nonvanishing of the
self-ambiguity function
\begin{equation}
 A_g(k,\ell)=\ip{g}{M_\ell T_kg}
 =\sum_{j\in\Z_N}\overline{g(j)}\,\omega_N^{\ell j}g(j-k)
 \label{eq:ambiguity}
\end{equation}
is sufficient for finite Gabor phase retrieval
\cite[Theorem~2.2]{BojarovskaFlinth2016}; for broader background on
uniqueness and stability in phase retrieval, see
\cite{GrohsKoppensteinerRathmair2020}.  F\"uhr and Oussa later isolated,
in the finite Schr\"odinger representation, the explicit simultaneous
realization of phase retrieval and full spark as an open construction problem
arising in their inductive treatment of finite nilpotent groups
\cite[Concluding Remarks]{FuehrOussa2023}.

The central observation here is that full spark and quantitative phase
retrieval need not be produced by the same seed.  The determinant part of our
argument is a quantitative repurposing of the high-degree algebraic
specialization used by Malikiosis in explicit full-spark Gabor constructions
\cite[Corollary~5.2]{Malikiosis2015}.  Instead of using the specialization
only to construct a full-spark window, we start from an arbitrary algebraic
window $b$ with a controlled ambiguity margin and perturb it with a separately
chosen full-spark window $h$:
\[
 g=b+\varepsilon h.
\]
The parameter $\varepsilon$ has algebraic degree greater than $N$ over a
number field containing $\omega_N$ and the coordinates of $b$ and $h$, so no
maximal Gabor determinant can vanish; at the same time, $\varepsilon$ is made
small enough that a fixed proportion of the ambiguity margin of $b$ survives.
Thus the new point of the regularization theorem is the quantitative transfer
from an arbitrary algebraic ambiguity seed to a simultaneous full-spark
window, rather than the high-degree specialization principle by itself.
Generic coexistence of the two nonvanishing properties is not the issue here:
the aim is to specify deterministic algebraic windows and retain explicit
quantitative lower bounds for every ambiguity coefficient.

For a nonzero window $g$, put
\begin{equation}
 \alpha(g)=\min_{(k,\ell)\in\Z_N^2}
 \frac{|A_g(k,\ell)|}{\norm g_2^2}.
 \label{eq:alpha}
\end{equation}

\begin{theorem}[Main quantitative simultaneous constructions]
\label{thm:main-intro}
The following deterministic constructions hold.
\begin{enumerate}
\item For every $N\ge2$ there is an explicit algebraic window $H_N\in\C^N$
whose Gabor system is full spark and phase retrievable and satisfies
\[
 \alpha(H_N)\ge \frac{47}{162}N^{-3/2}.
\]
\item For $N\ge25$, let
\[
 d_*(N)=\max\{d:d\mid N,\ d\le\sqrt N\}.
\]
There is an explicit algebraic window $G_N\in\C^N$ whose Gabor system is
full spark and phase retrievable and satisfies
\[
 \alpha(G_N)\ge \frac{47}{648}
 \frac{d_*(N)}{N^{3/2}}.
\]
\end{enumerate}
The underlying seed statements are sharper: Theorem~\ref{thm:constant-chu}
gives an exact ambiguity minimum for the universal constant--Chu seed, and
Theorem~\ref{thm:divisor-chu} proves a two-sided divisor-sensitive law.
\end{theorem}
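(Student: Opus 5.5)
The plan is to derive both parts from the two seed theorems, Theorem~\ref{thm:constant-chu} and Theorem~\ref{thm:divisor-chu}, together with a regularization lemma. The lemma takes any algebraic seed $b$ with $\alpha(b)>0$ and returns a full-spark window $g=b+\varepsilon h$ with $\alpha(g)\ge c\,\alpha(b)$ for an absolute constant $c$. Phase retrievability then follows from $\alpha(g)>0$ and \cite[Theorem~2.2]{BojarovskaFlinth2016}. So the work splits into three parts: (i) the seed bounds, taken from the cited theorems; (ii) a quantitative perturbation estimate for the ambiguity function; (iii) a full-spark argument by algebraic specialization.

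\emph{Perturbation estimate.} Normalize $\norm b_2=\norm h_2=1$, where $h$ is an explicit full-spark window, e.g.\ Malikiosis's \cite[Corollary~5.2]{Malikiosis2015}. Expanding sesquilinearly,
\[
A_g(k,\ell)=A_b(k,\ell)+\bar\varepsilon\ip{h}{M_\ell T_k b}+\varepsilon\ip{b}{M_\ell T_k h}+|\varepsilon|^2A_h(k,\ell).
\]
Since $M_\ell T_k$ is unitary, Cauchy--Schwarz bounds the error by $2|\varepsilon|+|\varepsilon|^2$. We also have $\norm g_2^2\le(1+|\varepsilon|)^2$. Hence
\[
\alpha(g)\ge\frac{\alpha(b)-2|\varepsilon|-|\varepsilon|^2}{(1+|\varepsilon|)^2}.
\]
Choosing $|\varepsilon|\le\theta\,\alpha(b)$ with a fixed small $\theta$ (say $\theta=1/8$) retains a fixed fraction of $\alpha(b)$, because $\alpha(b)\le1$. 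I will tune $\theta$ so that the seed constant times the retained fraction gives $47/162$ in part (1). For part (2), the divisor-Chu lower bound from Theorem~\ref{thm:divisor-chu}, multiplied by the same fraction, should give $47/648$; presumably its constant is one quarter of the constant--Chu one.

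\emph{Full spark.} Fix an $N$-subset $\Lambda\subset\Z_N^2$. Then $D_\Lambda(\varepsilon)=\det[M_\ell T_k(b+\varepsilon h)]_{(k,\ell)\in\Lambda}$ is a polynomial in $\varepsilon$ of degree at most $N$ with coefficients in $K=\Q(\omega_N,b,h)$. Its $\varepsilon^N$ coefficient is the corresponding determinant for $h$, which is nonzero because $h$ is full spark, so $D_\Lambda\not\equiv0$. If $\varepsilon$ has degree greater than $N$ over $K$, it cannot be a root, and every maximal minor is nonzero.

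\emph{Main obstacle.} The delicate step is making $\varepsilon$ explicit, small, and of provably high degree at once. I would take $\varepsilon=\theta\,\alpha_0\cdot 2^{1/p}/2$, where $\alpha_0$ is a rational lower bound for the seed margin (for example the explicit $N^{-3/2}$-type bound) and $p$ is a prime with $p>N[K:\Q]$. Then $x^p-2$ stays irreducible over $K$: Eisenstein gives degree $p$ over $\Q$, and $[K(2^{1/p}):K]$ is divisible by $p/\gcd(p,[K:\Q])=p$. So $\varepsilon$ has degree $p>N$ over $K$ and $|\varepsilon|\le\theta\alpha_0$. After this, only constant bookkeeping remains: combine the exact constant--Chu minimum and the divisor-Chu law for $N\ge25$ with the retention factor to reach the stated constants.
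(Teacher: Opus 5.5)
Your proposal is correct and is essentially the paper's proof. The paper obtains Theorem~\ref{thm:main-intro} by applying the regularization Theorem~\ref{thm:regularization} (the same perturbation bound, and the same degree-$\le N$ determinant polynomial with leading coefficient $D_\Lambda(h)\neq0$) to the seeds of Theorems~\ref{thm:constant-chu} and~\ref{thm:divisor-chu}; with $\theta=1/8$ the retained fraction is exactly $47/81$, giving $\tfrac{47}{81}\cdot\tfrac12=\tfrac{47}{162}$ and $\tfrac{47}{81}\cdot\tfrac18=\tfrac{47}{648}$, so your guess for the divisor constant is right.

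The only variation is how the degree of $\varepsilon$ is certified. The paper uses $\varepsilon=r^{-1/q}$ with $q>N$ prime and $r$ a large rational prime unramified in $K$, with Eisenstein applied at a prime of $K$ above $r$. Your $\varepsilon=c\,2^{1/p}$ with rational $c$ and a divisibility argument separates size from degree and needs no ramification data, only $p>N$ and $p\nmid[K:\Q]$. One small point: take $\alpha_0$ to be any rational number at most $\alpha(b)$ rather than literally $1/(2N^{3/2})$, which is irrational for nonsquare $N$; this costs nothing, since the final bound depends only on $\alpha(b)$.
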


\begin{remark}[Choosing the better universal or arithmetic seed]
\label{rem:combined-main}
For $N\ge25$, choose between the two windows in
Theorem~\ref{thm:main-intro} according to which certified bound is larger.
This gives an explicit algebraic full-spark phase-retrieval window $F_N$ with
\[
 \alpha(F_N)\ge \frac{47}{648}
 \frac{\max\{4,d_*(N)\}}{N^{3/2}}.
\]
Thus the two-site repair gives a genuine improvement over the universal
constant--Chu guarantee precisely when $d_*(N)>4$.
\end{remark}

The two Chu constructions below are the main constructive results.  The first quantitative seed is available in every cyclic dimension.  Periodic
ambiguity functions of CAZAC and chirp waveforms have long been studied; see,
for example, Benedetto and Donatelli \cite{BenedettoDonatelli2007}.  Following Chu's original polyphase construction \cite{Chu1972}, our use
of the Chu chirp is different: we seek a positive \emph{global} ambiguity
floor after an explicit nonunimodular repair.  A constant plus a
parity-adjusted Chu chirp has an exactly computable ambiguity margin of order
$N^{-3/2}$.  A second two-site Chu construction detects the arithmetic of
$N$: with $d_*(N)$ as in Theorem~\ref{thm:main-intro}, for $N\ge25$ the
two-site seed satisfies
\[
 \alpha\bigl(b_{N,d_*(N)}\bigr)\asymp \frac{d_*(N)}{N^{3/2}}.
\]
Thus a divisor near $\sqrt N$ produces a genuine gain; for example, square
dimensions have seed margin of order $N^{-1}$.  Algebraic regularization
retains the corresponding lower bound up to an absolute factor.

It is useful to place these bounds on the projector-Gram scale
$\lambda_{\min}=N\alpha(g)^2$ used for Weyl--Heisenberg measurements.
The universal construction gives $\lambda_{\min}\gtrsim N^{-2}$ in every
cyclic dimension, the divisor-sensitive construction gives
$\lambda_{\min}\gtrsim d_*(N)^2N^{-2}$, and square dimensions therefore
reach the scale $N^{-1}$.  With a bounded number of prime factors at least
$5$, the squarefree CRT construction below gives $\lambda_{\min}\gtrsim1$.
For comparison, Zhu and Wang's explicit all-dimensional cyclic family has
projector-Gram floor $\Theta(N^{-3})$ in odd dimensions and
$\Theta(N^{-5})$ in even dimensions \cite[Proposition~7]{ZhuWang2026}.
Their uniform prime-power finite-field constructions for nonprime dimensions
use the different phase space $\mathbb F_q^2$, rather than the cyclic phase
space $\Z_{p^r}^2$.  This comparison is only between the stated explicit
families and does not assert a global cyclic max--min optimum.

Further consequences include exact Chinese-remainder tensorization,
optimal-order squarefree families, and stability estimates for the complete
$N^2$-measurement operator.  Combining CRT with the balanced-Alltop prime
seeds of Zhu and Wang \cite{ZhuWang2026} recovers the optimal
$N^{-1/2}$ ambiguity scale on suitable squarefree dimensions; in prime
dimension, the same regularization enforces full spark while changing their
asymptotically SIC-flat nonidentity spectrum only by a lower-order amount.
The near-SIC property itself is due to Zhu and Wang.

The paper is organized accordingly.  Section~\ref{sec:regularization}
proves the regularization principle and records the ambiguity/spectral
interface.  Section~\ref{sec:constructions} gives the universal Chu seed, the
divisor-sensitive refinement, Chinese-remainder factorization, and the prime
near-SIC corollary.  Section~\ref{sec:consequences} derives stability and the
finite Schr\"odinger consequence.

\section{Algebraic regularization and the spectral interface}
\label{sec:regularization}

We first separate the determinant and ambiguity requirements.

\begin{theorem}[Quantitative algebraic full-spark regularization]
\label{thm:regularization}
Let $b,h\in\C^N$ be unit vectors whose coordinates are algebraic, and assume
that $\mathcal G(h)$ is full spark.  Let $K$ be a number field containing $\omega_N$ and
the coordinates of $b$ and $h$.  Choose a prime $q>N$ and a rational prime
$r$ unramified in $K$, and set
\[
 \varepsilon=r^{-1/q},\qquad g=b+\varepsilon h.
\]
Then $\mathcal G(g)$ is full spark.  Moreover,
\begin{equation}
 \alpha(g)\ge
 \frac{\alpha(b)-2\varepsilon-\varepsilon^2}{(1+\varepsilon)^2}.
 \label{eq:regularization-bound}
\end{equation}
In particular, if $\varepsilon\le\alpha(b)/8$, then
\begin{equation}
 \alpha(g)\ge\frac{47}{81}\alpha(b).
 \label{eq:regularization-47}
\end{equation}
\end{theorem}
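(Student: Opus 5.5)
The proof has two independent parts: an algebraic argument for full spark and an elementary perturbation estimate for the ambiguity bound.

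\emph{Full spark.} Fix an $N$-element subset $\Lambda\subset\Z_N^2$ and set
\[
 P_\Lambda(t)=\det\bigl[M_\ell T_k(b+th)\bigr]_{(k,\ell)\in\Lambda}.
\]
Since $M_\ell T_k$ is linear, each column is $M_\ell T_kb+t\,M_\ell T_kh$, so $P_\Lambda\in K[t]$ has degree at most $N$. Its coefficient of $t^N$ is $\det[M_\ell T_kh]_\Lambda$, which is nonzero because $\mathcal G(h)$ is full spark. Hence $P_\Lambda$ is a nonzero polynomial of degree at most $N$ over $K$, and it suffices to show $[K(\varepsilon):K]>N$.

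To get this, show that $x^q-r$ is irreducible over $K$. Since $r$ is unramified in $K$, take a prime $\mathfrak p$ of $K$ above $r$; it has $v_{\mathfrak p}(r)=1$. Then $x^q-r$ is Eisenstein at $\mathfrak p$ in the localization $\mathcal O_{K,\mathfrak p}[x]$, hence irreducible over $K$. Therefore $[K(\varepsilon):K]=q>N$, so $\varepsilon$ is not a root of $P_\Lambda$, and every maximal minor is nonzero. The only point needing care is stating the Eisenstein criterion over the DVR $\mathcal O_{K,\mathfrak p}$ and applying Gauss's lemma to pass to $K$; this is the conceptual crux, but it is standard.

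\emph{Ambiguity bound.} Use sesquilinearity of the ambiguity function and the fact that $\varepsilon$ is real:
\[
 A_g(k,\ell)=A_b(k,\ell)+\varepsilon\bigl(\ip{b}{M_\ell T_kh}+\ip{h}{M_\ell T_kb}\bigr)+\varepsilon^2A_h(k,\ell).
\]
Because $M_\ell T_k$ is unitary and $b,h$ are unit vectors, Cauchy--Schwarz bounds each cross term and $A_h$ by $1$ in modulus. Hence
\[
 |A_g(k,\ell)|\ge \alpha(b)-2\varepsilon-\varepsilon^2 .
\]
On the other side, $\norm g_2\le1+\varepsilon$, so $\norm g_2^2\le(1+\varepsilon)^2$. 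Dividing gives \eqref{eq:regularization-bound}; when the numerator is negative the bound holds trivially.

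\emph{The $47/81$ constant.} Put $a=\alpha(b)$ and note $a\le1$, since $|A_b(0,0)|=1$. If $\varepsilon\le a/8\le1/8$, then
\[
 a-2\varepsilon-\varepsilon^2\ge a-\tfrac a4-\tfrac{a}{64}=\tfrac{47}{64}a,
\qquad
 (1+\varepsilon)^2\le\bigl(\tfrac98\bigr)^2=\tfrac{81}{64}.
\]
The ratio is at least $\tfrac{47}{81}a$, which is \eqref{eq:regularization-47}. This bound also confirms $g\neq0$, so $\alpha(g)$ is defined.
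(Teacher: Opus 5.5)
Your proposal is correct and follows the paper's proof essentially step for step: Eisenstein at a prime of $K$ above the unramified prime $r$ gives $[K(\varepsilon):K]=q>N$, the nonzero leading coefficient $D_\Lambda(h)$ of $P_\Lambda(t)=D_\Lambda(b+th)$ rules out $P_\Lambda(\varepsilon)=0$, and the sesquilinear expansion with Cauchy--Schwarz together with the $\tfrac{47}{64}\big/\tfrac{81}{64}$ arithmetic gives both margin bounds. You also spell out details the paper leaves implicit, namely Eisenstein over the DVR $\mathcal O_{K,\mathfrak p}$ with Gauss's lemma, the trivial case of a negative numerator, and $g\neq0$.
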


\begin{proof}
Because $r$ is unramified in $K$, every prime ideal above $r$ occurs to
first order in $r\mathcal O_K$.  Eisenstein's criterion at any such prime
ideal proves that $X^q-r$ is irreducible over $K$.  Hence
$[K(\varepsilon):K]=q$.

Fix an $N$-element subset $\Lambda\subset\Z_N^2$, and let
$D_\Lambda(z)$ denote the determinant of the corresponding $N\times N$
Gabor submatrix generated by a formal window $z$.  Then
\[
 P_\Lambda(t)=D_\Lambda(b+th)\in K[t]
\]
has degree at most $N$.  Its coefficient of $t^N$ is $D_\Lambda(h)$,
which is nonzero by full spark.  Thus $P_\Lambda$ is nonzero, and since
$\deg P_\Lambda<q$, we have $P_\Lambda(\varepsilon)\neq0$.  This holds for
every $\Lambda$, so $\mathcal G(g)$ is full spark.

Expanding the ambiguity function gives, uniformly in $(k,\ell)$,
\[
 |A_g(k,\ell)-A_b(k,\ell)|
 \le 2\varepsilon+\varepsilon^2,
\]
because $b$ and $h$ are unit vectors.  Also
$\norm g_2\le1+\varepsilon$.  This proves
\eqref{eq:regularization-bound}.  Since $\alpha(b)\le1$ and
$\varepsilon\le\alpha(b)/8$,
\[
 2\varepsilon+\varepsilon^2
 \le \frac14\alpha(b)+\frac1{64}\alpha(b)
 =\frac{17}{64}\alpha(b),
\]
whereas $(1+\varepsilon)^2\le(9/8)^2=81/64$.  Hence
\eqref{eq:regularization-47} follows.
\end{proof}

\begin{remark}[A fixed explicit full-spark regularizer]
\label{rem:explicit-regularizer}
All choices above are effective.  We also fix a deterministic full-spark
regularizer in every dimension.  Put
\[
 M_N=N(N-1)^2+1,
\]
let $s_N$ be the least rational prime not dividing $N$, and let
$\xi_N=s_N^{1/M_N}$ be the positive real root.  Since $s_N$ is unramified
in $\Q(\omega_N)$, Eisenstein at a prime above $s_N$ gives
$[\Q(\omega_N,\xi_N):\Q(\omega_N)]=M_N$.  Malikiosis'
one-parameter specialization then shows that
\[
 h_N(j)=\xi_N^{j^2},\qquad 0\le j<N,
\]
generates a full-spark Gabor system; see
\cite[Corollary~5.2]{Malikiosis2015}.  We use its unit normalization whenever
Theorem~\ref{thm:regularization} requires a full-spark seed.  The standard
cyclotomic ramification fact used here is recorded, for example, in
\cite{Washington1997}.

Once $b$ and $h_N$ are fixed, one may take the least prime $q>N$ and then
the least rational prime $r$ unramified in the resulting number field and
large enough to meet any prescribed upper bound on
$\varepsilon=r^{-1/q}$.
\end{remark}

\begin{remark}[Deterministic window-selection rule]
\label{rem:deterministic-rule}
Given $N$ and one of the algebraic ambiguity seeds used below, normalize that
seed and the fixed window $h_N$ from Remark~\ref{rem:explicit-regularizer}.
Fix any explicit certified lower bound $0<\beta\le\alpha(b)$.  Let $K$ be
the number field generated by $\omega_N$ and the coordinates of the two
normalized windows, let $q$ be the least prime greater than $N$, and let $r$
be the least rational prime not dividing the discriminant of $K$ and satisfying
\[
 r\ge \left(\frac{8}{\beta}\right)^q.
\]
Then $r$ is unramified in $K$,
$\varepsilon=r^{-1/q}\le\beta/8\le\alpha(b)/8$, and
$g=b+\varepsilon h_N$ is the explicit regularized window.  No prior
computation of the exact ambiguity minimum is required.  For the universal
constant--Chu seed one may take
$\beta_N=1/(2N^{3/2})$, while for the two-site seed one may take
$\beta_{N,d}=d/(8N^{3/2})$.  For CRT products, the product of the local
certified lower bounds is admissible.  Thus the word ``explicit'' requires no
generic choice: every ingredient can be selected by a fixed deterministic
rule.
\end{remark}

The same ambiguity coefficients control the complete lifted measurement
spectrum.  For a unit vector $g$, define
\begin{equation}
 \mathcal Q_g(X)(k,\ell)=
 \operatorname{tr}\!\left(X(M_\ell T_kg)(M_\ell T_kg)^*\right),
 \qquad X\in\C^{N\times N}.
 \label{eq:Qg}
\end{equation}

\begin{proposition}[Ambiguity diagonalization]
\label{prop:diagonalization}
Let $g$ be unit norm.  If $\widehat X(k,\ell)$ are the coefficients of $X$
in the orthonormal Weyl basis $N^{-1/2}M_\ell T_k$, then
\begin{equation}
 \norm{\mathcal Q_g(X)}_2^2
 =N\sum_{k,\ell}|A_g(k,\ell)|^2|\widehat X(k,\ell)|^2.
 \label{eq:diag}
\end{equation}
Consequently,
\begin{equation}
 \sqrt N\,\alpha(g)\norm X_{\mathrm{HS}}
 \le \norm{\mathcal Q_g(X)}_2
 \le \sqrt N\norm X_{\mathrm{HS}}.
 \label{eq:lifted-bounds}
\end{equation}
If
\[
 \Pi_{k,\ell}=(M_\ell T_kg)(M_\ell T_kg)^*,
 \qquad
 G_g^{\Pi}[(k,\ell),(k',\ell')]
   =\operatorname{tr}(\Pi_{k,\ell}\Pi_{k',\ell'}),
\]
then the Hilbert--Schmidt projector Gram matrix $G_g^{\Pi}$ has eigenvalues
\begin{equation}
 \{N|A_g(k,\ell)|^2:(k,\ell)\in\Z_N^2\},
 \label{eq:projector-spectrum}
\end{equation}
up to the standard symplectic relabeling.
\end{proposition}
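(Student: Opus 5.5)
The plan is to compute $\mathcal Q_g(X)(k,\ell)$ directly in the Weyl basis and then apply Plancherel on $\Z_N^2$. Write $\pi(k,\ell)=M_\ell T_k$, so that $M_\ell T_kg=\pi(k,\ell)g$. Then
\[
 \mathcal Q_g(X)(k,\ell)=\ip{X\pi(k,\ell)g}{\pi(k,\ell)g}
 =\ip{\pi(k,\ell)^*X\pi(k,\ell)g}{g},
\]
where the inner product is linear in the first slot as in \eqref{eq:ambiguity}. Expand $X=\sum_{a,b}\widehat X(a,b)N^{-1/2}\pi(a,b)$; orthonormality of the Weyl basis $\{N^{-1/2}\pi(a,b)\}$ under the trace inner product is standard. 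The commutation relation $T_kM_b=\omega_N^{-bk}M_bT_k$ gives
\[
 \pi(k,\ell)^*\pi(a,b)\pi(k,\ell)=\omega_N^{\sigma((k,\ell),(a,b))}\pi(a,b),
 \qquad \sigma((k,\ell),(a,b))=bk-a\ell,
\]
with the exact sign convention to be fixed during the computation. Consequently
\[
 \mathcal Q_g(X)(k,\ell)=N^{-1/2}\sum_{a,b}\widehat X(a,b)\,\omega_N^{\sigma((k,\ell),(a,b))}\,\ip{\pi(a,b)g}{g},
\]
and $\ip{\pi(a,b)g}{g}=\overline{A_g(a,b)}$.

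This exhibits $\mathcal Q_g(X)$ as $N^{-1/2}$ times a symplectic Fourier transform on $\Z_N^2$ of the function $(a,b)\mapsto\widehat X(a,b)\overline{A_g(a,b)}$. The characters $(k,\ell)\mapsto\omega_N^{bk-a\ell}$ are orthogonal with squared norm $N^2$ each, and $(a,b)\mapsto$ character is a bijection because $\sigma$ is nondegenerate on $\Z_N^2$. Plancherel therefore gives
\[
 \norm{\mathcal Q_g(X)}_2^2=N^{-1}\cdot N^2\sum_{a,b}|A_g(a,b)|^2|\widehat X(a,b)|^2,
\]
which is \eqref{eq:diag}.

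For \eqref{eq:lifted-bounds}, use $\norm X_{\mathrm{HS}}^2=\sum|\widehat X|^2$, then bound $|A_g(a,b)|$ below by $\alpha(g)$ (since $\norm g_2=1$) and above by $1$ via Cauchy--Schwarz.

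For the spectrum, view $\mathcal Q_g$ as a map from $\C^{N\times N}$ with HS norm to $\C^{N^2}$. Then $G_g^\Pi=\mathcal Q_g\mathcal Q_g^*$, because $\mathcal Q_g^*e_{k,\ell}=\Pi_{k,\ell}$ and $\operatorname{tr}(\Pi\Pi')=\ip{\Pi'}{\Pi}_{\mathrm{HS}}$, the two projectors being Hermitian. By the computation above, $\mathcal Q_g=N^{-1/2}\,\mathcal F_\sigma\circ D\circ W$. Here $W$ is the unitary coefficient map $X\mapsto\widehat X$, $D$ is multiplication by $\overline{A_g}$, and $\mathcal F_\sigma$ is the symplectic Fourier matrix, which equals $N$ times a unitary $U$. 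Hence $\mathcal Q_g=\sqrt N\,U D W$, and
\[
 G_g^\Pi=N\,U|D|^2U^*,
\]
which is unitarily similar to $\operatorname{diag}(N|A_g(a,b)|^2)$. This gives \eqref{eq:projector-spectrum}. The relabeling is the bijection induced by $\sigma$, which is what the statement calls the standard symplectic relabeling.

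The main obstacle is bookkeeping rather than conceptual. Three things must be consistent: the phase in the conjugation identity, the conjugation ($A_g$ versus $\overline{A_g}$, irrelevant after taking moduli), and the check that $\mathcal F_\sigma/N$ is genuinely unitary. I would verify the last by noting that $\sigma$ is nondegenerate, so that $(a,b)\mapsto\omega_N^{\sigma(\cdot,(a,b))}$ enumerates all characters of $\Z_N^2$ exactly once.
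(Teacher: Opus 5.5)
Your proof is correct and takes the same route as the paper: expand $X$ in the Weyl basis, note that conjugation by $M_\ell T_k$ produces the symplectic characters $\omega_N^{bk-a\ell}$, recognize $\mathcal Q_g(X)$ as a Fourier transform on $\Z_N^2$ of $\widehat X\,\overline{A_g}$, and apply Parseval. The one variation is in the spectral claim: you factor $G_g^{\Pi}=\mathcal Q_g\mathcal Q_g^*=N\,U|D|^2U^*$, whereas the paper diagonalizes $G_g^{\Pi}$ as a phase-space circulant with kernel $|A_g|^2$, and your factorization is slightly cleaner because it avoids the identity, used implicitly by the circulant route, that the symplectic Fourier transform of $|A_g|^2$ equals $N|A_g|^2$.
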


\begin{proof}
Write $W_{k,\ell}=M_\ell T_k$.  The operators
$N^{-1/2}W_{k,\ell}$ form an orthonormal Hilbert--Schmidt basis.  If
$c_{k,\ell}=\operatorname{tr}(W_{k,\ell}^*X)$, then
\[
 X=\frac1N\sum_{k,\ell}c_{k,\ell}W_{k,\ell},
 \qquad
 \norm X_{\mathrm{HS}}^2=\frac1N\sum_{k,\ell}|c_{k,\ell}|^2.
\]
Likewise, the Weyl coefficient of $gg^*$ at $(k,\ell)$ has modulus
$|A_g(k,\ell)|$.  Conjugation by $W_{a,b}$ multiplies each Weyl basis
vector by the corresponding phase-space character.  Hence the unitary
Fourier transform on $\Z_N^2$ sends $\mathcal Q_g(X)$, up to a fixed
symplectic permutation and unimodular factors, to the array
\[
 (k,\ell)\longmapsto c_{k,\ell}A_g(k,\ell).
\]
Parseval and the preceding normalization give \eqref{eq:diag}.  The
inequalities follow from
$\alpha(g)\le |A_g(k,\ell)|\le1$.  Applying the same Fourier
characters to the phase-space circulant matrix $G_g^{\Pi}$ gives the
eigenvalues in \eqref{eq:projector-spectrum}; see also
\cite{GoldbergerKangOkoudjou2022}.  The corresponding Gabor stability
interface is discussed in \cite{AlaifariWellershoff2021}.
\end{proof}

\begin{lemma}[Moyal barrier]
\label{lem:moyal}
Every unit vector $g\in\C^N$ satisfies
\begin{equation}
 \alpha(g)\le\frac1{\sqrt{N+1}}.
 \label{eq:moyal-barrier}
\end{equation}
\end{lemma}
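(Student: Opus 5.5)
The plan is to use the Moyal-type identity for the finite ambiguity function: for a unit vector $g$,
\[
 \sum_{(k,\ell)\in\Z_N^2}|A_g(k,\ell)|^2=N\norm g_2^4=N.
\]
Separating the origin term $A_g(0,0)=\norm g_2^2=1$ leaves $N^2-1$ nonorigin terms whose squares sum to $N-1$. The minimum of these squares is at most their average $(N-1)/(N^2-1)=1/(N+1)$. This gives
\[
 \alpha(g)^2\le\min_{(k,\ell)\neq(0,0)}|A_g(k,\ell)|^2\le\frac1{N+1},
\]
which is \eqref{eq:moyal-barrier}.

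It remains to justify the identity, and there are two routes.

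The first route is through Proposition~\ref{prop:diagonalization} with $X=gg^*$. On one hand, the Weyl coefficients of $gg^*$ have moduli $|A_g(k,\ell)|$ after normalization. Concretely, $\operatorname{tr}(W_{k,\ell}^*gg^*)=\overline{A_g(k,\ell)}$, so $\widehat X(k,\ell)$ has modulus $N^{-1/2}|A_g(k,\ell)|$. Parseval for the orthonormal basis $N^{-1/2}W_{k,\ell}$ then gives
\[
 \sum_{k,\ell}|A_g(k,\ell)|^2=N\norm{gg^*}_{\mathrm{HS}}^2=N.
\]
This already suffices, and it is the cleanest argument.

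The second route is a direct check in coordinates. For fixed $k$, the map $\ell\mapsto A_g(k,\ell)$ is the (unnormalized) DFT of $j\mapsto\overline{g(j)}g(j-k)$. Plancherel gives
\[
 \sum_\ell|A_g(k,\ell)|^2=N\sum_j|g(j)|^2|g(j-k)|^2,
\]
and summing over $k$ yields $N\norm g_2^4$.

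The main point needing care is only the normalization bookkeeping: the factor $N$ in Parseval and the fact that the origin term equals exactly $1$. I would present the direct Plancherel computation, since it avoids the symplectic relabeling caveat in Proposition~\ref{prop:diagonalization}.
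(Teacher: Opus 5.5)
Your proposal is correct and follows the paper's proof: the finite Moyal identity $\sum_{k,\ell}|A_g(k,\ell)|^2=N$, separating the origin term $A_g(0,0)=1$, and averaging over the remaining $N^2-1$ terms, which the paper writes as $N\ge1+(N^2-1)\alpha(g)^2$. The only difference is that you also prove the Moyal identity, via Plancherel in $\ell$ for each fixed $k$; the paper simply cites it.
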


\begin{proof}
The finite Moyal identity gives
$\sum_{k,\ell}|A_g(k,\ell)|^2=N$.  Since $A_g(0,0)=1$,
\[
 N\ge1+(N^2-1)\alpha(g)^2,
\]
which is equivalent to \eqref{eq:moyal-barrier}.
\end{proof}

\section{Quantitative cyclic constructions}
\label{sec:constructions}

\subsection{A universal constant--Chu seed}

Define the parity-adjusted Chu chirp (in the normalization adapted to our
translation--modulation convention) \cite{Chu1972}
\begin{equation}
 q_N(j)=
 \begin{cases}
  \exp(\pi i j^2/N),&N\text{ even},\\[1mm]
  \exp(\pi i j(j+1)/N),&N\text{ odd}.
 \end{cases}
 \label{eq:chu}
\end{equation}
It is $N$-periodic and unimodular.  Put
\[
 Q_N(\ell)=\sum_{j\in\Z_N}q_N(j)\omega_N^{\ell j}.
\]
A shift of the summation variable gives
\begin{equation}
 Q_N(\ell)=
 \begin{cases}
  e^{-\pi i\ell^2/N}Q_N(0),&N\text{ even},\\
  e^{-\pi i\ell(\ell+1)/N}Q_N(0),&N\text{ odd}.
 \end{cases}
 \label{eq:chu-fourier}
\end{equation}
Parseval then implies $|Q_N(\ell)|=\sqrt N$ for every $\ell$; in
particular $Q_N(0)\neq0$.  Write
\[
 \eta_N=Q_N(0)/\sqrt N,
 \qquad
 \tau_N=
 \begin{cases}
  e^{\pi i/(2N)},&N\text{ even},\\
  1,&N\text{ odd},
 \end{cases}
 \qquad
 \lambda_N=\tau_N\eta_N^{-1}.
\]

\begin{theorem}[Exact universal Chu margin]
\label{thm:constant-chu}
Let
\begin{equation}
 b_N(j)=1+\lambda_Nq_N(j).
 \label{eq:constant-chu-window}
\end{equation}
Then
\begin{equation}
 \alpha(b_N)=
 \frac{\sqrt N\sin(\pi/(2N))}
 {N+\sqrt N\,\operatorname{Re}\tau_N}.
 \label{eq:constant-chu-exact}
\end{equation}
In particular,
\begin{equation}
 \alpha(b_N)\ge\frac1{2N^{3/2}}
 \qquad(N\ge2),
 \label{eq:universal-seed-lower}
\end{equation}
and $\alpha(b_N)\sim (\pi/2)N^{-3/2}$.
\end{theorem}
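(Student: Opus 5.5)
The plan is to expand the ambiguity function of $b_N=1+\lambda_Nq_N$ bilinearly into four pieces and evaluate each one in closed form. Two facts are used throughout: $|\lambda_N|=1$, since $|\tau_N|=|\eta_N|=1$, and $\lambda_NQ_N(0)=\tau_N\sqrt N$. The four pieces are as follows.
\begin{itemize}
\item The constant--constant term is $\sum_j\omega_N^{\ell j}=N\delta_{\ell,0}$.
\item The two cross terms are $\lambda_N\omega_N^{\ell k}Q_N(\ell)$ and $\overline{\lambda_N}\,\overline{Q_N(-\ell)}$.
\item The chirp--chirp term comes from $\overline{q_N(j)}q_N(j-k)$. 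This product is a linear character in $j$ times a phase depending only on $k$; for $N$ even it equals $e^{\pi ik^2/N}\omega_N^{-jk}$, and the odd case is analogous. The term therefore equals $N\cdot(\text{unimodular})\cdot\delta_{\ell,k}$.
\end{itemize}
Substituting \eqref{eq:chu-fourier} into the cross terms, every ambiguity value becomes an explicit combination of unimodular phases.

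At $(0,0)$ this gives $\norm{b_N}_2^2=2N+2\sqrt N\operatorname{Re}\tau_N$. The proof then splits into three regimes.
\begin{itemize}
\item For $\ell=0$, $k\neq0$, only the cross terms survive. Their value is $\sqrt N(\tau_N+\overline{\tau_N})=2\sqrt N\operatorname{Re}\tau_N$, which is large.
\item For $\ell\neq0$, $\ell\neq k$, we again have only the two cross terms, of the form $\sqrt N\,(\tau_Ne^{i\phi}+\overline{\tau_N}e^{-i\psi})$. In the even case the modulus is
\[
 2\sqrt N\,\Bigl|\cos\frac{\pi\bigl(1+2\ell(k-\ell)\bigr)}{2N}\Bigr| .
\]
In the odd case the analogous exponent is an integer, so the modulus is $2\sqrt N|\cos(\pi m'/N)|$ for some integer $m'$. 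In both cases the argument of the cosine is an odd multiple of $\pi/(2N)$, because $N$ odd makes $\pi m'/N=\pi(2m')/(2N)$ and $2m'$ can be replaced mod $2N$ by $2m'+N$, which is odd. The closest such a multiple can come to $\pi/2$ is $\pi/2\pm\pi/(2N)$, so the modulus is at least $2\sqrt N\sin(\pi/(2N))$.
\item For $\ell=k\neq0$ the extra term of size $N$ is present. By the triangle inequality, $|A_{b_N}(k,k)|\ge N-2\sqrt N$, which exceeds $2\sqrt N\sin(\pi/(2N))$ once $N\ge5$ or so.
\end{itemize}

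Next I show the bound in the second regime is attained. Take $\ell=1$, so the quantity $\ell(k-\ell)=k-1$ ranges over all residues mod $N$, and choose $k$ so that the odd multiple equals $N\pm1$. For $N$ odd, track the exact phase from \eqref{eq:chu-fourier} to confirm that the achievable odd multiples include $N\pm1$ (or the reduction just described reaches them). Dividing the minimum by $\norm{b_N}_2^2$ gives \eqref{eq:constant-chu-exact}.

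For the remaining claims:
\begin{itemize}
\item For \eqref{eq:universal-seed-lower}, use $\sin(\pi/(2N))\ge 1/N$ (concavity, since $\pi/(2N)\le\pi/2$) together with $N+\sqrt N\operatorname{Re}\tau_N\le 2N$.
\item The asymptotic $\alpha(b_N)\sim(\pi/2)N^{-3/2}$ follows from $\sin x\sim x$ and $\operatorname{Re}\tau_N\to1$.
\end{itemize}

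The main obstacle is the diagonal $\ell=k\neq0$ for small $N$, where the crude triangle bound fails (for $N=2$ it is negative). There I would compute the three terms exactly. The chirp term $N e^{\pi ik^2/N}$ and the cross-term sum $2\sqrt N\cos(\cdot)e^{i\theta}$ have explicit phases, so the modulus can be written explicitly. I would verify directly, for $N\le4$ or so, that it dominates $2\sqrt N\sin(\pi/(2N))$. A secondary point requiring care is the odd-$N$ phase bookkeeping with $\tau_N=1$, to make sure that the minimum over $(k,\ell)$ is exactly $\sin(\pi/(2N))$ and not something smaller.
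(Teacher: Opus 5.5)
Your route is the paper's: the four-term expansion, the off-line modulus $\sqrt N\,|1+\tau_N^2\omega_N^{\ell(k-\ell)}|$, the half-mesh minimum attained at $\ell=1$, division by $\norm{b_N}_2^2=2N+2\sqrt N\operatorname{Re}\tau_N$, and concavity of sine. However, the odd-$N$ step, which carries the lower bound in that case, is wrong as written. For $N$ odd the angle $\pi m'/N=\pi(2m')/(2N)$ is an \emph{even} multiple of $\pi/(2N)$. Passing from $2m'$ to $2m'+N$ is not a congruence modulo $2N$: it shifts the angle by $\pi/2$ and exchanges $|\cos|$ with $|\sin|$. Moreover, if the angle really were an odd multiple of $\pi/(2N)$ with $N$ odd, it could equal $N\cdot\pi/(2N)=\pi/2$ and the cosine would vanish, so your sentence proves nothing in that case. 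What holds in both parities is a parity mismatch. The angle differs from $\pi/2=N\pi/(2N)$ by an odd multiple of $\pi/(2N)$: the numerator is odd ($2m+1$) against even $N$, or even ($2m'$) against odd $N$. Hence $|\cos|\ge\sin(\pi/(2N))$. For odd $N$, equality comes from $2m'=N\pm1$, i.e.\ $m'=(N\pm1)/2$; these numerators are even, not ``odd multiples''. This is the paper's observation that $-1$ lies halfway between two $N$th roots of unity when $N$ is odd, while $\tau_N^2=e^{\pi i/N}$ shifts the grid by half a mesh when $N$ is even.

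On the line $\ell=0$, $k\neq0$, you dropped the constant--constant term $N\mathbf 1_{\ell=0}$, so the true value is $N+2\sqrt N\operatorname{Re}\tau_N$. This slip is harmless, because even $2\sqrt N\operatorname{Re}\tau_N\ge2\sqrt N\sin(\pi/(2N))$. On the diagonal your threshold is off. The inequality $N-2\sqrt N>2\sqrt N\sin(\pi/(2N))$ fails at $N=5$ ($0.53$ versus $1.38$) and at $N=6$ ($1.10$ versus $1.27$), and first holds at $N=7$. So a direct check only for $N\le4$ leaves a hole. In fact no case analysis is needed. At $\ell=k$ the two cross terms are $\tau_N\sqrt N\,a_k$ and $\overline{\tau_N}\sqrt N\,a_k$, which carry the same phase as the chirp term $Na_k$. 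Therefore
\[
 A_{b_N}(k,k)=a_k\bigl(N+2\sqrt N\operatorname{Re}\tau_N\bigr)
\]
exactly, for every $N\ge2$. This is how the paper disposes of what you flagged as the main obstacle, and it also shows $|A_{b_N}(k,0)|=|A_{b_N}(k,k)|$.
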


\begin{proof}
The chirp satisfies
\begin{equation}
 q_N(j-k)=a_kq_N(j)\omega_N^{-kj},\qquad |a_k|=1,
 \label{eq:chu-shift}
\end{equation}
where
\[
 a_k=
 \begin{cases}
 e^{\pi i k^2/N},&N\text{ even},\\
 e^{\pi i(k^2-k)/N},&N\text{ odd}.
 \end{cases}
\]
Hence
\[
 A_{q_N}(k,\ell)=Na_k\,\mathbf 1_{\ell=k}.
\]
Since $|\lambda_N|=1$, direct expansion gives the useful identity
\begin{equation}
 A_{b_N}(k,\ell)
 =N\mathbf 1_{\ell=0}+Na_k\mathbf 1_{\ell=k}
  +\lambda_Na_kQ_N(\ell-k)
  +\overline{\lambda_N}\,\overline{Q_N(-\ell)}.
 \label{eq:constant-chu-expansion}
\end{equation}
Using \eqref{eq:chu-fourier} in \eqref{eq:constant-chu-expansion}, whenever
$\ell\neq0$ and $\ell\neq k$ we obtain
\begin{equation}
 |A_{b_N}(k,\ell)|
 =\sqrt N\left|1+\tau_N^2\omega_N^{\ell(k-\ell)}\right|.
 \label{eq:off-line-chu}
\end{equation}
If $N$ is odd, $-1$ lies halfway between two $N$th roots of unity at
angular distance $\pi/N$; if $N$ is even, the factor
$\tau_N^2=e^{\pi i/N}$ shifts the $N$th-root grid by half a mesh.  In both
cases the minimum in \eqref{eq:off-line-chu} is therefore
\[
 2\sqrt N\sin\frac{\pi}{2N}.
\]
It is attained, for example, by taking $\ell=1$ and choosing $k$ so that
$\ell(k-\ell)$ is a nearest residue to the antipodal phase.

For $k\neq0$ one also obtains
\[
 |A_{b_N}(k,0)|=|A_{b_N}(k,k)|
 =N+2\sqrt N\,\operatorname{Re}\tau_N,
\]
which is larger than the off-line minimum.  Finally,
\[
 \norm{b_N}_2^2=A_{b_N}(0,0)
 =2N+2\sqrt N\,\operatorname{Re}\tau_N.
\]
This proves \eqref{eq:constant-chu-exact}.  Since
$\sin(\pi/(2N))\ge1/N$ and
$N+\sqrt N\operatorname{Re}\tau_N\le2N$, we get
\eqref{eq:universal-seed-lower}; the asymptotic follows immediately.
\end{proof}

\begin{corollary}[Universal simultaneous construction]
\label{cor:universal}
For every $N\ge2$ there is an explicit algebraic window $H_N\in\C^N$ such
that $\mathcal G(H_N)$ is full spark, does phase retrieval, and
\begin{equation}
 \alpha(H_N)\ge\frac{47}{162}N^{-3/2}.
 \label{eq:universal-fullspark}
\end{equation}
\end{corollary}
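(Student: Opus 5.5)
The plan is to combine Theorem~\ref{thm:constant-chu} with Theorem~\ref{thm:regularization}, using the deterministic selection rule of Remark~\ref{rem:deterministic-rule}.

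\emph{Seed.} Take the constant--Chu window $b_N$ of \eqref{eq:constant-chu-window} and normalize it, $b=b_N/\norm{b_N}_2$. Since $\alpha$ is scale invariant, \eqref{eq:universal-seed-lower} gives $\alpha(b)\ge\beta_N:=1/(2N^{3/2})$. Its coordinates are algebraic. Indeed, $q_N(j)$, $\tau_N$ and $Q_N(0)$ lie in a cyclotomic field (roots of unity of order dividing $4N$). Also $\eta_N=Q_N(0)/\sqrt N$ with $\sqrt N$ algebraic, so $\lambda_N$ is algebraic, and so is the normalizing factor $\norm{b_N}_2=(2N+2\sqrt N\operatorname{Re}\tau_N)^{1/2}$.

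\emph{Regularizer.} Take the unit normalization $h$ of $h_N(j)=\xi_N^{j^2}$ from Remark~\ref{rem:explicit-regularizer}. It is algebraic and $\mathcal G(h)$ is full spark, by Malikiosis' specialization as recorded there.

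\emph{Choice of $\varepsilon$.} Let $K$ be the number field generated by $\omega_N$ and the coordinates of $b$ and $h$, and let $q$ be the least prime exceeding $N$. Let $r$ be the least rational prime not dividing $\operatorname{disc}K$ with $r\ge(8/\beta_N)^q=(16N^{3/2})^q$. Such an $r$ exists because only finitely many primes ramify. Set $\varepsilon=r^{-1/q}$, so that $\varepsilon\le\beta_N/8\le\alpha(b)/8$, and define $H_N=b+\varepsilon h$.

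\emph{Conclusion.} Theorem~\ref{thm:regularization} now gives two things. First, $\mathcal G(H_N)$ is full spark. Second,
\[
\alpha(H_N)\ge\tfrac{47}{81}\alpha(b)\ge\tfrac{47}{81}\cdot\tfrac{1}{2N^{3/2}}=\tfrac{47}{162}N^{-3/2}.
\]
In particular $\alpha(H_N)>0$, so every $A_{H_N}(k,\ell)\ne0$. The Bojarovska--Flinth criterion \cite[Theorem~2.2]{BojarovskaFlinth2016} then yields phase retrieval.

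The only step needing care is verifying that all coordinates are algebraic and that $K$ is a genuine number field. This is routine, since everything is generated by roots of unity, square roots and $\xi_N$. The unramified hypothesis is met by excluding the finitely many primes dividing the discriminant. No further obstacle is expected.
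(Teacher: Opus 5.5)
Your proposal is correct and follows essentially the same route as the paper. You normalize the constant--Chu seed $b_N$, regularize it with the Malikiosis full-spark window via Theorem~\ref{thm:regularization} using $\varepsilon\le\alpha(b)/8$, combine the factor $\frac{47}{81}$ with the seed bound $\frac{1}{2N^{3/2}}$, and invoke Bojarovska--Flinth for phase retrieval. Your extra details — the explicit choice of $r$ from Remark~\ref{rem:deterministic-rule} and the check that every coordinate is algebraic — simply spell out what the paper's short proof leaves implicit.
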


\begin{proof}
All coordinates of $b_N$ are algebraic.  Normalize this seed, choose
an explicit unit full-spark algebraic window $h_N$, and apply
Theorem~\ref{thm:regularization} with
$\varepsilon\le\alpha(b_N)/8$.  The ambiguity margin is scale invariant.
Nonvanishing of all ambiguity coefficients implies phase retrieval by
\cite[Theorem~2.2]{BojarovskaFlinth2016}.
\end{proof}

\subsection{A divisor-sensitive two-site repair}

The preceding construction is uniform in $N$.  A sparse repair of the Chu
chirp detects additional arithmetic structure.

\begin{theorem}[Divisor-sensitive Chu law]
\label{thm:divisor-chu}
Assume $N\ge25$, let $d\mid N$ with $1\le d\le\sqrt N$, and put
$L=N/d$ and
\[
 \phi_d=\frac\pi2+\frac{\pi}{4d}.
\]
Define
\begin{equation}
 b_{N,d}=q_N+\sqrt N\,e_0+
 \sqrt N\,e^{-i\phi_d}q_N(d)e_d.
 \label{eq:divisor-seed}
\end{equation}
Then
\begin{equation}
 \frac18\frac{d}{N^{3/2}}
 \le \alpha(b_{N,d})
 \le 4\pi\frac{d}{N^{3/2}}.
 \label{eq:divisor-two-sided}
\end{equation}
\end{theorem}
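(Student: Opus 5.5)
The plan is to expand $A_{b_{N,d}}$ exactly, as in the proof of Theorem~\ref{thm:constant-chu}, by splitting the window as $b_{N,d}=q_N+u$ with $u=\sqrt N\,e_0+\sqrt N\,c_d e_d$ and $c_d=e^{-i\phi_d}q_N(d)$. Then
\[
 A_{b}(k,\ell)=A_{q_N}(k,\ell)+\ip{q_N}{M_\ell T_k u}+\ip{u}{M_\ell T_k q_N}+A_u(k,\ell).
\]
I will compute the four pieces separately. By \eqref{eq:chu-shift}, $A_{q_N}(k,\ell)=Na_k\mathbf 1_{\ell=k}$. The term $A_u$ is supported on $k\in\{0,\pm d\}$ and has modulus of order $N$ there. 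Each cross term is a sum of two unimodular multiples of $\sqrt N$, one coming from the site $0$ and one from the site $d$. For the norm, I expect $\norm{b}_2^2=3N+O(\sqrt N)$, since the cross term $2\operatorname{Re}\ip{q_N}{u}$ is $O(\sqrt N)$. It therefore suffices to prove that $\min_{k,\ell}|A_b(k,\ell)|$ lies between constant multiples of $d/\sqrt N$, and to track the constants $1/8$ and $4\pi$ at the end.

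The main case is the generic one: $k\notin\{0,\pm d\}$ and $\ell\neq k$. There only the four $\sqrt N$-size cross terms survive. Using \eqref{eq:chu-shift} and $q_N(j)$ evaluated at $j\in\{0,d,\pm k, d\pm k\}$, I will write them as $\sqrt N$ times four explicit unimodular phases. The quadratic phase is chosen, via the factor $q_N(d)$ in $c_d$, so that the two site-$d$ terms differ from the two site-$0$ terms by a common factor $e^{\mp i\phi_d}\omega_N^{\pm d m}$, where $m$ is an affine function of $(k,\ell)$. The target is a factorization, or at least a pairing, of the form
\[
 |A_b(k,\ell)|=\sqrt N\,\bigl|1+\zeta\,\omega_N^{s}\bigr|\cdot\bigl|1+e^{i\theta_d}\,\omega_L^{m}\bigr|\quad\text{or a comparable sum},
\]
with $\zeta$ unimodular. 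The key point is that $\omega_N^{dm}=\omega_L^m$ runs over the $L$-th roots of unity, whose mesh is $2\pi d/N$. The offset $\phi_d=\pi/2+\pi/(4d)$ is tuned so that this shifted grid stays at angular distance at least of order $d/N$ from $-1$ (or from the phase that would cancel the first factor). This yields a lower bound of order $\sqrt N\cdot d/N=d/\sqrt N$. The condition $d\le\sqrt N$ (so $L\ge d$) and $N\ge25$ will be used to keep the second-order terms, such as the $\sin$ versus linear comparison and the $O(\sqrt N)$ norm correction, within the constant $1/8$.

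The exceptional lines $\ell=k$, $\ell=0$, and $k\in\{0,\pm d\}$ will be treated separately. There a term of size $N$ (from $A_{q_N}$ or $A_u$) is present, and the remaining contributions total $O(\sqrt N)$. Two things must then be checked. First, on $\ell=k$ the $N a_k$ term cannot be cancelled. Second, on $k\in\{0,\pm d\}$ the two $N$-size terms $N\mathbf 1_{\ell=0}$-type and $N\bar c_d\omega_N^{\ell d}$-type do not cancel to below $d\sqrt N$. This is again guaranteed by the $\pi/(4d)$ offset, which keeps the phase at distance of order $1/d$ from antipodal, giving modulus of order $N/d\ge\sqrt N\ge d/\sqrt N$. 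For the upper bound I will pick explicit $(k,\ell)$ in the generic regime, say $\ell=1$ and $m$ chosen as the grid point nearest the cancelling phase. There the second factor is at most $2\sin(\pi d/N)\le 2\pi d/N$ while the first is at most $2$. Dividing by $\norm b_2^2\ge N$ gives $4\pi d/N^{3/2}$.

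The step I expect to be the main obstacle is the generic-case algebra: verifying that the four cross terms really collapse into a product (or a well-controlled pair) in which the $L$-th root grid appears with the prescribed offset. Odd and even $N$ must be handled in parallel because of the different $a_k$. If an exact factorization fails, I will fall back on bounding $|x+y|\ge\bigl||x|-|y|\bigr|$ after grouping the site-$0$ and site-$d$ pairs. Each pair is of the form $\sqrt N|1+\omega_N^{s}|$, and I would try to show that the two pairs cannot simultaneously nearly cancel each other, using the $\pi/(4d)$ offset.
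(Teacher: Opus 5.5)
Your overall architecture matches the paper's: the four-term expansion, generic translations $k\notin\{0,\pm d\}$ versus the exceptional ones, $\norm{b_{N,d}}_2^2=3N+O(\sqrt N)$, and an explicit near-cancellation for the upper bound. But the generic lower bound, which is the heart of the proof, has a genuine gap.

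Carrying out your computation with $m=\ell-k$, $z=\omega_N^{md}$, $\xi=\omega_N^{km}$ gives, for $k\notin\{0,\pm d\}$,
\[
 A_{b_{N,d}}(k,k+m)=a_k\bigl[N\mathbf 1_{m=0}+\sqrt N\,(X+\xi Y)\bigr],
 \qquad X=1+e^{i\phi_d}z,\quad Y=1+e^{-i\phi_d}z .
\]
So the site-$d$ terms differ from the site-$0$ terms by $e^{-i\phi_d}z$ and $e^{i\phi_d}z$, with the \emph{same} $z$ in both. With a common factor, or with $z$ and $\bar z$ as you wrote, the expression would factor. With the true phases it cannot, since a product of a function of $\xi$ and a function of $z$ would force $e^{i\phi_d}=e^{-i\phi_d}$. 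Your upper-bound computation, which uses that product, therefore has to be redone.

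Your fallback fails as well. Grouping by site gives $(1+\xi)+z(e^{i\phi_d}+\xi e^{-i\phi_d})$, and the reverse triangle inequality then yields $\bigl||1+\xi|-|e^{2i\phi_d}+\xi|\bigr|$. This vanishes at $\xi=\pm e^{i\phi_d}$. Already for $m=1$ the values $\xi=\omega_N^k$ run through all but three $N$th roots, so this lower bound can be zero (e.g.\ $N=48$, $d=3$, where $e^{i\phi_3}=\omega_{48}^{14}$).

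The missing idea is to group by powers of $\xi$. The bound $|X+\xi Y|\ge\bigl||X|-|Y|\bigr|$ is independent of $\xi$, and $|X|^2-|Y|^2=-4\sin\phi_d\sin\theta$ for $z=e^{i\theta}$. Hence $|X+\xi Y|\ge|\sin\phi_d\sin\theta|\ge\sin(\pi/L)/\sqrt2$ whenever $z\neq\pm1$.

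\begin{itemize}
\item This step uses only $\sin\phi_d\ge2^{-1/2}$. The offset $\pi/(4d)$ has no fixed relation to the $L$th-root mesh $2\pi/L$, so it cannot be what protects that grid, contrary to your description.
\item The offset matters exactly in the degenerate cases $z=\pm1$, where $|X|=|Y|$. There $|X+\xi Y|$ equals $2\cos(\phi_d/2)|\xi+e^{i\phi_d}|$, respectively $2\sin(\phi_d/2)|\xi-e^{i\phi_d}|$, and $\xi$ is confined to the $d$th, respectively $2d$th, roots of unity. The offset keeps the cancelling phase at angular distance at least $\pi/(4d)$ from that grid, giving a bound of order $1/d\ge1/L$. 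This is where $d\le\sqrt N$ is used.
\item The upper bound comes from $m=1$: $\min_{|\xi|=1}|X+\xi Y|\le2|z-1|\le4\pi/L$. Then choose an allowed $k$ with $\omega_N^k$ within $4\pi/N$ of the continuous minimizer. To reach the constant $4\pi$ you need $\norm{b_{N,d}}_2^2\ge3N$, not merely $\ge N$.
\end{itemize}

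Your exceptional-case mechanism is also misattributed. At $k=0$ one has $A_{b_{N,d}}(0,\ell)=(N+2\sqrt N)+(N+2\sqrt N\cos\phi_d)\omega_N^{\ell d}$. Its $N$-size parts cancel exactly when $\omega_N^{\ell d}=-1$, and what survives is $2\sqrt N(1-\cos\phi_d)\ge2\sqrt N$, coming from the modulus asymmetry rather than any offset. At $(k,\ell)=(d,d)$ the phase $e^{i\phi_d}$ is at distance of order one from $-1$, not of order $1/d$.

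Two smaller corrections. The line $\ell=0$ with $k\notin\{0,\pm d\}$ carries no $N$-size term and belongs to the generic case. And $a_k$ is a unimodular prefactor, so no odd/even split is needed.
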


\begin{proof}
Let $R=\sqrt N$.  Use the shift factor $a_k$ from
\eqref{eq:chu-shift}, put $m=\ell-k$, and define
\[
 z=\omega_N^{md}=e^{2\pi i m/L},\qquad
 \xi=\omega_N^{km}.
\]
If $k\notin\{0,\pm d\}$, the two repair coordinates do not overlap under
translation by $k$.  The chirp--repair and repair--chirp contributions are,
respectively,
\[
 R a_k\xi\bigl(1+e^{-i\phi_d}z\bigr)
 \quad\text{and}\quad
 R a_k\bigl(1+e^{i\phi_d}z\bigr).
\]
Together with $A_{q_N}(k,k+m)=Na_k\mathbf1_{m=0}$, this gives the exact identity
\begin{equation}
 A_{b_{N,d}}(k,k+m)
 =a_k\left[N\mathbf1_{m=0}+R(X+\xi Y)\right],
 \label{eq:divisor-master}
\end{equation}
where
\[
 X=1+e^{i\phi_d}z,
 \qquad
 Y=1+e^{-i\phi_d}z.
\]
For $m=0$, the term in brackets is larger than $N$ in modulus.  Suppose
$m\neq0$.  If $z\neq\pm1$, then
\[
 |X+\xi Y|\ge\bigl||X|-|Y|\bigr|
 =\frac{4|\sin\phi_d\sin\theta|}{|X|+|Y|}
 \ge |\sin\phi_d\sin\theta|,
\]
where $z=e^{i\theta}$.  Since
$\sin\phi_d\ge2^{-1/2}$ and every $L$th root different from $\pm1$
satisfies $|\sin\theta|\ge\sin(\pi/L)\ge2/L$, we obtain
\begin{equation}
 |X+\xi Y|\ge\frac{\sqrt2}{L}.
 \label{eq:generic-root-gap}
\end{equation}

If $z=1$, then $m=tL$ for some integer $t$, and hence
$\xi=e^{2\pi i kt/d}$ lies on the $d$th-root grid.  The dangerous phase is
$-e^{i\phi_d}$, whose angle is $3\pi/2+\pi/(4d)$.  After scaling angles
by $d/(2\pi)$, its position is
\[
 \frac{3d}{4}+\frac18.
\]
A check of $d$ modulo $4$ shows that its distance from the nearest integer is
at least $1/8$; equivalently, $-e^{i\phi_d}$ is at angular distance at least
$\pi/(4d)$ from every $d$th root of unity.  Since
\[
 |X+\xi Y|
 =2\cos(\phi_d/2)\,|e^{i\phi_d}+\xi|,
\]
we obtain
\[
 |X+\xi Y|
 \ge4\cos(\phi_d/2)\sin\frac{\pi}{8d}
 \ge\frac{2-\sqrt2}{d}
 \ge\frac{2-\sqrt2}{L}.
 \label{eq:z-one-gap}
\]
Here we used
$\cos(\phi_d/2)\ge\sin(\pi/8)$ and concavity of sine.  If $z=-1$, then
$L$ is even and $m=(2t+1)L/2$ for some integer $t$.  Consequently
$\xi=e^{\pi i k(2t+1)/d}$ lies on the $2d$th-root grid.  The dangerous
cancellation phase is $e^{i\phi_d}$.  Scaling angles by $d/\pi$, its
position is
\[
 \frac d2+\frac14,
\]
which has distance exactly $1/4$ from the nearest integer, independently of
the parity of $d$.  Thus its angular distance from the $2d$th-root grid is
$\pi/(4d)$, and
\[
 |X+\xi Y|
 =2\sin(\phi_d/2)|\xi-e^{i\phi_d}|
 \ge4\sin(\phi_d/2)\sin\frac{\pi}{8d}
 \ge\frac{2-\sqrt2}{d}.
\]  Thus, for every
$k\notin\{0,\pm d\}$,
\begin{equation}
 |A_{b_{N,d}}(k,\ell)|
 \ge (2-\sqrt2)\frac{\sqrt N}{L}.
 \label{eq:generic-lower-A}
\end{equation}

It remains to treat the three exceptional translations.  At $(k,\ell)=(0,0)$,
the ambiguity coefficient is $\norm{b_{N,d}}_2^2$ and is therefore harmless.
At $k=0$ and $\ell\neq0$,
\[
 A_{b_{N,d}}(0,\ell)
 =(N+2R)+(N+2R\cos\phi_d)\omega_N^{\ell d},
\]
so the reverse triangle inequality gives
$|A_{b_{N,d}}(0,\ell)|\ge2R$.  At $k=d$, the repair--repair overlap has
modulus $N$.  If $m\neq0$, the mixed terms have total modulus at most
$4R$, hence
\[
 |A_{b_{N,d}}(d,d+m)|\ge N-4R\ge R.
\]
If $m=0$, then after removing the unimodular factor $a_d$ the coefficient
is
\[
 N(1+e^{i\phi_d})+2R(1+\cos\phi_d),
\]
whose imaginary part has modulus
$N\sin\phi_d\ge N/\sqrt2$.  The case $k=-d$ follows from the exact adjoint identity
\[
 A_{b_{N,d}}(-k,-\ell)
 =\omega_N^{k\ell}\overline{A_{b_{N,d}}(k,\ell)},
\]
which follows directly from \eqref{eq:ambiguity} and reduces the modulus
estimate to the case $k=d$.

Finally,
\begin{equation}
 \norm{b_{N,d}}_2^2
 =3N+2\sqrt N(1+\cos\phi_d).
 \label{eq:divisor-norm}
\end{equation}
Since $N\ge25$, this is at most $17N/5$.  Combining
\eqref{eq:generic-lower-A} with $L=N/d$ yields
\[
 \alpha(b_{N,d})
 \ge \frac{5(2-\sqrt2)}{17}\frac{d}{N^{3/2}}
 >\frac18\frac{d}{N^{3/2}}.
\]

For the upper bound take $m=1$, so $z=e^{2\pi i/L}$.  The continuous
minimum of $|X+\xi Y|$ over $|\xi|=1$ equals
$\bigl||X|-|Y|\bigr|$.  Because $|X|=|Y|$ at $z=1$ and both depend
Lipschitz-continuously on $z$,
\[
 \bigl||X|-|Y|\bigr|\le2|z-1|\le\frac{4\pi}{L}.
\]
Among the four $N$th roots closest to any prescribed target phase, at
least one remains after deleting the three forbidden values $k=0,\pm d$.
Hence some allowed $\omega_N^k$ lies within angular distance at most
$4\pi/N$ of a continuous minimizer.  Since $|Y|\le2$, for this allowed
$k$ we have
\[
 |X+\omega_N^kY|\le\frac{4\pi}{L}+\frac{8\pi}{N}
 \le\frac{12\pi}{L}.
\]
Using \eqref{eq:divisor-master} and
$\norm{b_{N,d}}_2^2\ge3N$ gives
\[
 \alpha(b_{N,d})\le4\pi\frac{d}{N^{3/2}},
\]
as required.
\end{proof}

\begin{corollary}[Arithmetic refinement after regularization]
\label{cor:divisor-fullspark}
For $N\ge25$, let
\[
 d_*(N)=\max\{d:d\mid N,\ d\le\sqrt N\}.
\]
There is an explicit algebraic full-spark phase-retrieval window $G_N$ with
\begin{equation}
 \alpha(G_N)\ge\frac{47}{648}
 \frac{d_*(N)}{N^{3/2}}.
 \label{eq:divisor-regularized}
\end{equation}
In particular, if $N$ is a square, the regularized construction satisfies
$\alpha(G_N)\ge cN^{-1}$ with an absolute constant $c>0$.
\end{corollary}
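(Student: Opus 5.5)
The argument mirrors the proof of Corollary~\ref{cor:universal}, with the seed $b_N$ replaced by the two-site seed $b_{N,d}$ at $d=d_*(N)$. First we check that the seed is admissible. Since $1$ divides $N$ and $1\le\sqrt N$, the set defining $d_*(N)$ is nonempty, so $d=d_*(N)$ satisfies $1\le d\le\sqrt N$ and $d\mid N$. With $N\ge25$, Theorem~\ref{thm:divisor-chu} then gives
\[
 \alpha(b_{N,d})\ge\frac18\,\frac{d}{N^{3/2}}.
\]
The coordinates of $b_{N,d}$ are algebraic. They involve $q_N(j)$, which are roots of unity, $\sqrt N$, and $e^{-i\phi_d}=e^{-\pi i/2}e^{-\pi i/(4d)}$, which is a root of unity. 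Hence the normalized seed $b=b_{N,d}/\norm{b_{N,d}}_2$ has algebraic coordinates. Since $\alpha$ is scale invariant, $\alpha(b)=\alpha(b_{N,d})$.

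Next we regularize. Take $h$ to be the unit normalization of the explicit full-spark window $h_N$ from Remark~\ref{rem:explicit-regularizer}. Follow the deterministic rule of Remark~\ref{rem:deterministic-rule} with certified bound $\beta=d/(8N^{3/2})$. Let $K=\Q(\omega_N,\text{coordinates of } b,h)$, let $q$ be the least prime exceeding $N$, and let $r$ be the least prime unramified in $K$ with $r\ge(8/\beta)^q$. Then $\varepsilon=r^{-1/q}\le\beta/8\le\alpha(b)/8$. Put $G_N=b+\varepsilon h$. By Theorem~\ref{thm:regularization}, $\mathcal G(G_N)$ is full spark, and \eqref{eq:regularization-47} gives
\[
 \alpha(G_N)\ge\frac{47}{81}\,\alpha(b)\ge\frac{47}{81}\cdot\frac18\,\frac{d_*(N)}{N^{3/2}}=\frac{47}{648}\,\frac{d_*(N)}{N^{3/2}}.
\]
Since this bound is positive, every ambiguity coefficient $A_{G_N}(k,\ell)$ is nonzero. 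Phase retrieval then follows from \cite[Theorem~2.2]{BojarovskaFlinth2016}.

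For the square case, if $N=n^2$ then $n\mid N$ and $n=\sqrt N$, so $d_*(N)=\sqrt N$. The bound becomes $\alpha(G_N)\ge\frac{47}{648}N^{-1}$, so $c=47/648$ works.

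There is no real obstacle; the only delicate points are bookkeeping. First, the constant from Theorem~\ref{thm:divisor-chu} must be used at the normalized level, which is fine because $\alpha$ is scale invariant. Second, the existence of an unramified prime $r$ above any threshold must be noted; this holds because only finitely many primes divide the discriminant of $K$.
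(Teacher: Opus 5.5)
Your proposal is correct and follows the paper's own argument. You normalize the two-site seed $b_{N,d_*(N)}$, use Theorem~\ref{thm:divisor-chu} for the floor $\frac18\,d/N^{3/2}$, and apply Theorem~\ref{thm:regularization} with $\varepsilon\le\alpha(b_{N,d_*(N)})/8$ to get $\frac{47}{81}\cdot\frac18=\frac{47}{648}$. You also make explicit what the paper's one-line proof leaves implicit: the deterministic choice of $q$ and $r$ from Remark~\ref{rem:deterministic-rule}, phase retrieval via Bojarovska--Flinth, and $d_*(N)=\sqrt N$ in the square case, which gives $c=47/648$.
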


\begin{proof}
The coordinates of $b_{N,d_*(N)}$ are algebraic.  Normalize this seed and apply
Theorem~\ref{thm:regularization} with
$\varepsilon\le\alpha(b_{N,d_*(N)})/8$.
\end{proof}

\subsection{Chinese-remainder factorization}

\begin{theorem}[Exact CRT multiplicativity]
\label{thm:crt}
Let
\[
 N=n_1\cdots n_s,
 \qquad \gcd(n_i,n_j)=1\quad(i\neq j).
\]
Under the Chinese-remainder unitary identification
\[
 \C^{\Z_N}\simeq\bigotimes_{i=1}^s\C^{\Z_{n_i}},
\]
let $v_i\in\C^{\Z_{n_i}}$ be nonzero and set
$v=v_1\otimes\cdots\otimes v_s$.  Then
\begin{equation}
 \alpha(v)=\prod_{i=1}^s\alpha(v_i).
 \label{eq:crt-alpha}
\end{equation}
If all $v_i$ are algebraic and have positive ambiguity margin, a single
application of Theorem~\ref{thm:regularization} produces an explicit
full-spark phase-retrieval window $G_N$ satisfying
\begin{equation}
 \alpha(G_N)\ge\frac{47}{81}
 \prod_{i=1}^s\alpha(v_i).
 \label{eq:crt-regularized}
\end{equation}
\end{theorem}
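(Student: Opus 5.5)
The plan is to make the Chinese-remainder identification explicit and show that it intertwines the cyclic Heisenberg group of $\Z_N$ with the tensor product of the local Heisenberg groups, up to unimodular phases. Then \eqref{eq:crt-alpha} follows by taking moduli.

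First I would fix the ring isomorphism $\Z_N\cong\prod_i\Z_{n_i}$, $j\mapsto(j\bmod n_i)_i$. This induces the unitary $U:\C^{\Z_N}\to\bigotimes_i\C^{\Z_{n_i}}$ sending $e_j$ to $\bigotimes_i e_{j\bmod n_i}$. Under $U$, translation is straightforward: $UT_kU^{-1}=\bigotimes_iT_{k\bmod n_i}$, since translation is addition in the ring. For modulation I would write $N_i=N/n_i$ and choose $u_i$ with $u_iN_i\equiv1\pmod{n_i}$. Then $\sum_iu_iN_i\equiv1\pmod N$, so
\[
 \omega_N^{\ell j}=\prod_i\omega_N^{\ell u_iN_ij}=\prod_i\omega_{n_i}^{\ell u_i j}.
\]
The factor $\omega_{n_i}^{\ell u_i j}$ depends only on $j\bmod n_i$. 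Hence $UM_\ell U^{-1}=\bigotimes_iM_{u_i\ell\bmod n_i}$. This yields
\[
 A_v(k,\ell)=\prod_iA_{v_i}(k_i,\ell_i),\qquad (k_i,\ell_i)=(k\bmod n_i,\ u_i\ell\bmod n_i),
\]
because inner products factor across tensor products. Since each $u_i$ is invertible mod $n_i$, the map $(k,\ell)\mapsto((k_i,\ell_i))_i$ is a bijection $\Z_N^2\to\prod_i\Z_{n_i}^2$. The norm also factors: $\norm v_2^2=\prod_i\norm{v_i}_2^2$. Taking the minimum of a product of nonnegative quantities over independent coordinates gives the product of the minima, which proves \eqref{eq:crt-alpha}.

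For \eqref{eq:crt-regularized}, I would check that $v$, pulled back to $\C^N$ via $U^{-1}$, has algebraic coordinates. This holds because each coordinate of $v$ is a product of coordinates of the $v_i$. Its margin is positive by the product formula. I would then normalize it and apply Theorem~\ref{thm:regularization} with $b=v/\norm v_2$, $h=h_N$ from Remark~\ref{rem:explicit-regularizer}, and $\varepsilon\le\alpha(v)/8$, which gives
\[
 \alpha(G_N)\ge\frac{47}{81}\alpha(v)=\frac{47}{81}\prod_i\alpha(v_i).
\]
Phase retrieval then follows from nonvanishing of the ambiguity function via \cite[Theorem~2.2]{BojarovskaFlinth2016}. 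The deterministic rule of Remark~\ref{rem:deterministic-rule} can be applied with $\beta$ equal to the product of the local certified bounds.

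The only delicate step is the modulation twist. The CRT does not send $M_\ell$ to $\bigotimes_i M_{\ell\bmod n_i}$; it sends it to the version twisted by the units $u_i$. So I must verify that this relabeling is a bijection of phase space and introduces no extra phase. Both hold here, because the twist acts on modulation indices only and is invertible coordinatewise, and translations map exactly with no phase.
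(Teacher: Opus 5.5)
Your proposal is correct and follows the paper's proof essentially step for step. You use the same Chinese-remainder unitary and the same unit-twisted modulation indices $\ell_i\equiv u_i\ell \pmod{n_i}$ (the paper's $s_i$). You then factor $A_v$ and $\norm v_2^2$ and use bijectivity of the index map to realize the local minimizers simultaneously, and finish with a single normalized application of Theorem~\ref{thm:regularization} with $\varepsilon\le\frac18\prod_i\alpha(v_i)$.
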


\begin{proof}
Put $N_i=N/n_i$ and choose $s_i$ with
$N_is_i\equiv1\pmod{n_i}$.  The CRT representation
$j\equiv\sum_i j_iN_is_i\pmod N$ gives
\[
 T_k\longmapsto\bigotimes_iT_{k_i},
 \qquad
 M_\ell\longmapsto\bigotimes_iM_{\ell_i},
\]
where $k_i\equiv k\pmod{n_i}$ and
$\ell_i\equiv\ell s_i\pmod{n_i}$.  Hence
\[
 A_v(k,\ell)=\prod_iA_{v_i}(k_i,\ell_i),
 \qquad
 \norm v_2^2=\prod_i\norm{v_i}_2^2.
\]
Both $k\mapsto(k_i)_i$ and
$\ell\mapsto(\ell_i)_i$ are bijections.  The local minimizing phase-space
indices may therefore be realized simultaneously, proving the exact equality
\eqref{eq:crt-alpha}.  Since $\alpha$ is invariant under nonzero scalar multiplication, normalize each
$v_i$ to unit norm; algebraicity is preserved under this normalization, and the
tensor seed is unit with the same ambiguity margin.  Choose any explicit algebraic full-spark auxiliary window and normalize
it as well.  Now choose the algebraic perturbation parameter in
Theorem~\ref{thm:regularization} to be at most
$\frac18\prod_i\alpha(v_i)$ and apply the theorem once to the normalized
tensor seed.
\end{proof}

\subsection{Optimal squarefree scale and near-SIC prime regularization}

We now use a recent prime-dimensional input.  Zhu and Wang construct, for
every prime power $q=p^r$ of characteristic $p\ge5$, a balanced-Alltop unit
vector $\phi_q$ (building on Alltop's cubic phase construction
\cite{Alltop1980}) whose nonidentity projector-Gram spectrum lies in an explicit
interval $[L_q,U_q]$ and whose minimum equals $L_q$
\cite[Theorem~11]{ZhuWang2026}.  For prime $q=p$, this is the same cyclic
phase space used here; for $r>1$ their finite-field phase space is different
from $\Z_{p^r}^2$, as they explicitly emphasize.  Their formulas give
\begin{equation}
 L_p\le p|A_{\phi_p}(k,\ell)|^2\le U_p
 \qquad((k,\ell)\neq(0,0)),
 \label{eq:ZW-prime}
\end{equation}
with
\begin{equation}
 L_p\ge L_5=0.197863708777\ldots,
 \qquad
 \frac{U_p}{L_p}\longrightarrow1.
 \label{eq:ZW-flat}
\end{equation}
The seed $\phi_p$ is algebraic: its Alltop coordinates are roots of unity
and the balancing parameter
\[
 t_p=\frac1{\sqrt{4+\sqrt p}+2}
\]
is algebraic.

\begin{corollary}[Squarefree optimal order]
\label{cor:squarefree}
Let
\[
 N=p_1\cdots p_s
\]
be squarefree with distinct primes $p_i\ge5$.  Then there is an explicit
algebraic full-spark phase-retrieval window $G_N$ satisfying
\begin{equation}
 \alpha(G_N)\ge\frac{47}{81}
 \frac{L_5^{s/2}}{\sqrt N}.
 \label{eq:squarefree-new}
\end{equation}
Consequently, for every fixed $s_0$, this is the optimal order
$N^{-1/2}$ uniformly over such $N$ with $s\le s_0$.
\end{corollary}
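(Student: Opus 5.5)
The plan is to combine the Zhu--Wang prime seeds with Theorem~\ref{thm:crt}. For each prime factor $p_i\ge5$, take the balanced-Alltop unit vector $\phi_{p_i}$ as the local seed $v_i$. This seed is algebraic: its Alltop coordinates are roots of unity and the balancing parameter $t_{p_i}$ is algebraic.

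The first step is to bound each local margin. By \eqref{eq:ZW-prime}, every nonidentity coefficient satisfies $|A_{\phi_{p_i}}(k,\ell)|\ge\sqrt{L_{p_i}/p_i}$. At the identity, $|A_{\phi_{p_i}}(0,0)|=1$. Since $L_{p_i}\le U_{p_i}$ and the Moyal identity forces the nonidentity values to average roughly $1/p_i$, we also have $\sqrt{L_{p_i}/p_i}\le1$. Hence $\alpha(\phi_{p_i})=\sqrt{L_{p_i}/p_i}\ge\sqrt{L_5/p_i}$, where the last inequality uses \eqref{eq:ZW-flat}. Positivity of each margin holds because $L_5>0$.

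The second step applies Theorem~\ref{thm:crt}. Since the $p_i$ are distinct primes, they are pairwise coprime, so the CRT identification applies. Equation~\eqref{eq:crt-regularized} then gives an explicit full-spark window $G_N$ with
\[
 \alpha(G_N)\ge\frac{47}{81}\prod_{i=1}^s\sqrt{\frac{L_5}{p_i}}=\frac{47}{81}\frac{L_5^{s/2}}{\sqrt N}.
\]
This bound is strictly positive, so every ambiguity coefficient of $G_N$ is nonzero. Phase retrieval then follows from \cite[Theorem~2.2]{BojarovskaFlinth2016}, as in Corollary~\ref{cor:universal}. Explicitness follows from the deterministic selection rule of Remark~\ref{rem:deterministic-rule}, using the product of the local certified bounds $\sqrt{L_5/p_i}$ as $\beta$.

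The last step is optimality. Lemma~\ref{lem:moyal} gives $\alpha(g)\le(N+1)^{-1/2}\le N^{-1/2}$ for every window. For $s\le s_0$, the lower bound is at least $\tfrac{47}{81}\min(1,L_5)^{s_0/2}N^{-1/2}$, which is a constant multiple of $N^{-1/2}$ independent of $N$. So the order $N^{-1/2}$ is attained uniformly. Note that $L_5<1$, so the constant decreases with $s$; this is why $s$ must be bounded.

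I expect the main obstacle to be the first step: confirming that the Zhu--Wang spectral statement translates into the ambiguity bound in the cyclic normalization used here. Specifically, the projector-Gram eigenvalues $p|A|^2$ from Proposition~\ref{prop:diagonalization} must match their convention up to symplectic relabeling, so that \eqref{eq:ZW-prime} bounds every nonidentity $(k,\ell)$ and not just a relabeled set. I will take \eqref{eq:ZW-prime} as stated, since the relabeling is a bijection of $\Z_p^2\setminus\{0\}$ and therefore leaves the minimum unchanged.
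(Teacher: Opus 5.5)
Your proposal is correct and follows the paper's own proof. You tensor the Zhu--Wang balanced-Alltop prime seeds, use \eqref{eq:ZW-prime}--\eqref{eq:ZW-flat} to get local margins at least $\sqrt{L_5/p_i}$, apply Theorem~\ref{thm:crt} with one regularization, and obtain optimality of the exponent from Lemma~\ref{lem:moyal}; your check that the identity coefficient $A_{\phi_{p_i}}(0,0)=1$ cannot lower the local minimum (which also follows directly from $\sqrt{L_5/p_i}<1$) fills in a detail the paper leaves implicit.
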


\begin{proof}
Take the tensor product of the prime balanced-Alltop seeds.  Equations
\eqref{eq:ZW-prime}--\eqref{eq:ZW-flat} and
Theorem~\ref{thm:crt} give a unit tensor seed with ambiguity margin at least
$L_5^{s/2}/\sqrt N$.  Apply one algebraic regularization.  Optimality of the
exponent follows from Lemma~\ref{lem:moyal}.
\end{proof}

\begin{corollary}[Full spark with asymptotically SIC-flat prime spectrum]
\label{cor:near-sic-fullspark}
For every prime $p\ge5$ there is an explicit algebraic unit vector
$\widetilde g_p$ such that $\mathcal G(\widetilde g_p)$ is full spark and
does phase retrieval.  Its nonidentity projector-Gram spectrum is contained
in
\begin{equation}
 [L_p-Cp^{-3/2},\ U_p+Cp^{-3/2}]
 \label{eq:near-sic-interval}
\end{equation}
for an absolute constant $C$.  Let $\mathbf 1$ denote the constant vector in
phase-space coefficient space and let $\kappa$ denote the spectral condition
number.  Then, in particular,
\begin{equation}
 p\,\alpha(\widetilde g_p)^2\longrightarrow1,
 \qquad
 \kappa\!\left(G^{\Pi}_{\widetilde g_p}\big|_{\mathbf 1^\perp}\right)\longrightarrow1.
 \label{eq:near-sic-limits}
\end{equation}
\end{corollary}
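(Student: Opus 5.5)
We regularize the Zhu--Wang balanced-Alltop seed $\phi_p$ with a parameter far smaller than $\alpha(\phi_p)/8$, and then transfer the spectral statement through Proposition~\ref{prop:diagonalization}.

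\textbf{Construction.} Let $h_p$ be the unit full-spark window of Remark~\ref{rem:explicit-regularizer}. Choose the least prime $q>p$ and the least rational prime $r$ that is unramified in the field $K$ generated by $\omega_p$ and the coordinates of $\phi_p,h_p$, and that also satisfies $r\ge p^{2q}$. With $\varepsilon=r^{-1/q}$ this gives $\varepsilon\le p^{-2}$. Set $g=\phi_p+\varepsilon h_p$ and $\widetilde g_p=g/\norm g_2$. Theorem~\ref{thm:regularization} makes $\mathcal G(\widetilde g_p)$ full spark.

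\textbf{Perturbation estimate.} Rather than using only the lower bound \eqref{eq:regularization-bound}, record the two-sided estimate from its proof. Since $|A_g-A_{\phi_p}|\le 2\varepsilon+\varepsilon^2$ uniformly, and $\norm g_2^2\in[(1-\varepsilon)^2,(1+\varepsilon)^2]$, we get for every $(k,\ell)$:
\[
 \bigl|A_{\widetilde g_p}(k,\ell)-A_{\phi_p}(k,\ell)\bigr|\le C_0\varepsilon\le C_0p^{-2},
\]
with $C_0$ absolute. Using $|A|\le1$, this yields
\[
 \bigl|p|A_{\widetilde g_p}|^2-p|A_{\phi_p}|^2\bigr|\le 2pC_0p^{-2}\le Cp^{-1}\le Cp^{-3/2}\cdot p^{1/2}.
\]
That is not yet enough, so the target rate needs to be met by a smaller $\varepsilon$: require instead $r\ge p^{5q/2}$, so that $\varepsilon\le p^{-5/2}$. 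Then the deviation is $\le 2C_0p^{-3/2}$. The requirement on $\varepsilon$ is a free deterministic choice, exactly as in Remark~\ref{rem:deterministic-rule}.

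Combining this with \eqref{eq:ZW-prime}, every nonidentity value $p|A_{\widetilde g_p}(k,\ell)|^2$ lies in $[L_p-Cp^{-3/2},U_p+Cp^{-3/2}]$. By \eqref{eq:projector-spectrum} these values are exactly the nonidentity projector-Gram eigenvalues. The identity eigenvalue is $p|A(0,0)|^2=p$, and its eigenvector is the constant vector $\mathbf 1$ (the Gram matrix is phase-space circulant and the $(0,0)$ Fourier character is constant). Hence the spectrum on $\mathbf 1^\perp$ is the nonidentity one, giving \eqref{eq:near-sic-interval}.

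\textbf{Phase retrieval and limits.} Since $L_p\ge L_5>0$ and $Cp^{-3/2}\to0$, the interval's left endpoint is positive for large $p$. For the finitely many small $p$, use instead the $47/81$ bound \eqref{eq:regularization-47}, since $\varepsilon\le\alpha(\phi_p)/8$ holds once $p^{-5/2}\le \sqrt{L_5/p}/8$. This is true for all $p\ge5$. So all ambiguity coefficients are nonzero, and \cite[Theorem~2.2]{BojarovskaFlinth2016} gives phase retrieval.

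For the limits, the Moyal identity on unit vectors forces the average of $p|A|^2$ over the $p^2-1$ nonidentity points to be $p(p-1)/(p^2-1)=p/(p+1)\to1$. Together with $U_p/L_p\to1$, this shows $L_p\to1$ and $U_p\to1$. Hence $p\,\alpha(\widetilde g_p)^2=\min\to1$, and the condition number, which is the ratio of the interval endpoints, tends to $1$.

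\textbf{Main obstacle.} The delicate step is identifying $\mathbf 1^\perp$ with the nonidentity eigenspace under the "standard symplectic relabeling" of Proposition~\ref{prop:diagonalization}. I would verify it directly: $G^\Pi\mathbf 1$ has entries $\sum_{k',\ell'}|\ip{M_\ell T_kg}{M_{\ell'}T_{k'}g}|^2=\sum|A_g|^2=p$, so $\mathbf 1$ is an eigenvector with eigenvalue $p$ corresponding to $(0,0)$.
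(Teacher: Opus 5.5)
Your proof is correct. It follows the paper's structure: regularize the Zhu--Wang seed with a small algebraic $\varepsilon$, bound the coefficientwise change by $O(\varepsilon)$, and transfer this through Proposition~\ref{prop:diagonalization}. Two sub-steps are done differently.

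\textbf{Getting the $p^{-3/2}$ rate.} The paper keeps $\varepsilon_p\le\min\{p^{-2},\sqrt{L_5/p}/8\}$. It then uses that off the identity $|A_{\phi_p}(u)|\le\sqrt{U_p/p}=O(p^{-1/2})$, which gains the extra factor $p^{-1/2}$. This requires the uniform bound $U_p=O(1)$, taken from Zhu--Wang's explicit formulas. You use only the trivial bound $|A|\le1$ and compensate with $\varepsilon\le p^{-5/2}$. That costs nothing, because $r$ is chosen by a free deterministic rule.

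\textbf{Showing $L_p,U_p\to1$.} The paper cites Zhu--Wang for this. You derive it from $U_p/L_p\to1$ alone: by the Moyal identity, the average of $p|A_{\phi_p}|^2$ over nonidentity points is $p/(p+1)$, and this average must lie in $[L_p,U_p]$.

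As a result, your version uses only \eqref{eq:ZW-prime}--\eqref{eq:ZW-flat} as quoted in the paper. Your direct check that $G^{\Pi}\mathbf 1=p\mathbf 1$, combined with the eigenvalue multiset $\{p|A(v)|^2\}$, identifies the spectrum on $\mathbf 1^\perp$ without tracking the symplectic relabeling. The paper's route keeps a larger $\varepsilon$ but relies on more of the external input.

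Three small points to tighten.
\begin{itemize}
\item $\alpha$ also includes the point $(0,0)$, where $p|A(0,0)|^2=p$. You should say, as the paper does, that for large $p$ the minimum is attained off the identity because $U_p+Cp^{-3/2}<p$.
\item The condition number on $\mathbf 1^\perp$ is bounded by the ratio of the interval endpoints, not equal to it.
\item The construction should state only the final requirement $r\ge p^{5q/2}$, not the superseded $r\ge p^{2q}$.
\end{itemize}
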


\begin{proof}
Let $\phi_p$ be the balanced-Alltop unit vector above, and let $h_p$ be an
explicit algebraic unit full-spark window.  Choose the algebraic parameter in
Theorem~\ref{thm:regularization} so that
\[
 \varepsilon_p\le
 \min\left\{p^{-2},\frac{\sqrt{L_5/p}}8\right\}.
\]
Put $g_p=\phi_p+\varepsilon_ph_p$ and
$\widetilde g_p=g_p/\norm{g_p}_2$.  Theorem~\ref{thm:regularization} gives
full spark and a positive ambiguity floor.

Write $\delta_p=2\varepsilon_p+\varepsilon_p^2$.  Since
$\norm{g_p}_2^2\ge(1-\varepsilon_p)^2$ and
$\varepsilon_p\le1/4$,
\begin{equation}
 |A_{\widetilde g_p}(u)-A_{\phi_p}(u)|
 \le8\varepsilon_p
 \qquad(u\in\Z_p^2).
 \label{eq:normalized-perturbation}
\end{equation}
For $u\neq0$, Theorem~11 and the explicit formulas in
\cite{ZhuWang2026} give a uniform bound $U_p=O(1)$ for $p\ge5$.  Therefore
\[
 p\left||A_{\widetilde g_p}(u)|^2-|A_{\phi_p}(u)|^2\right|
 \le C p^{-3/2}
\]
with an absolute $C$.  Proposition~\ref{prop:diagonalization} and
\eqref{eq:ZW-prime} give \eqref{eq:near-sic-interval}.
Finally $L_p,U_p\to1$ by \cite{ZhuWang2026}.  Hence $U_p<p$ for
all sufficiently large primes $p$.  For such $p$, every nonidentity ambiguity
coefficient of $\phi_p$ has modulus strictly below one; the same remains true
after the $O(p^{-2})$ regularized perturbation.  Thus, for all sufficiently
large $p$, the minimum defining $\alpha$ is attained off the identity.  Together
with \eqref{eq:near-sic-interval} and $L_p,U_p\to1$, this proves
\eqref{eq:near-sic-limits}.
\end{proof}

\section{Stability and representation consequences}
\label{sec:consequences}

\subsection{Lifted stability}

\begin{corollary}[Deterministic lifted inversion]
\label{cor:lifted}
Let $g$ be any of the unit-normalized regularized windows above.  Then for
all $X\in\C^{N\times N}$,
\begin{equation}
 \norm X_{\mathrm{HS}}
 \le \frac1{\sqrt N\,\alpha(g)}
 \norm{\mathcal Q_g(X)}_2.
 \label{eq:inverse-lifted}
\end{equation}
For unit vectors $x,y$ this yields
\begin{equation}
 \inf_{|\tau|=1}\norm{x-\tau y}_2
 \le \frac1{\sqrt N\,\alpha(g)}
 \norm{\mathcal Q_g(xx^*)-\mathcal Q_g(yy^*)}_2.
 \label{eq:signal-stability}
\end{equation}
\end{corollary}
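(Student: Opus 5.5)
The plan is to read both inequalities off the lower bound in \eqref{eq:lifted-bounds} of Proposition~\ref{prop:diagonalization}, and then add an elementary comparison between the Hilbert--Schmidt distance of rank-one projectors and the phase-invariant distance of the vectors. First we check positivity. Each regularized window $g$ in Corollaries~\ref{cor:universal}, \ref{cor:divisor-fullspark}, \ref{cor:squarefree}, \ref{cor:near-sic-fullspark} and Theorem~\ref{thm:crt} is unit-normalized. Its ambiguity margin satisfies $\alpha(g)>0$, by \eqref{eq:regularization-47} applied to a seed with $\alpha(b)>0$; since $\alpha$ is scale invariant, normalization does not change this. Hence $\sqrt N\,\alpha(g)>0$, and the left inequality in \eqref{eq:lifted-bounds}, $\sqrt N\,\alpha(g)\norm X_{\mathrm{HS}}\le\norm{\mathcal Q_g(X)}_2$, rearranges to \eqref{eq:inverse-lifted} for every $X\in\C^{N\times N}$.

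For \eqref{eq:signal-stability}, note that $\mathcal Q_g$ is linear in $X$ by \eqref{eq:Qg}. So with $X=xx^*-yy^*$ we have $\mathcal Q_g(xx^*)-\mathcal Q_g(yy^*)=\mathcal Q_g(X)$, and \eqref{eq:inverse-lifted} bounds $\norm{xx^*-yy^*}_{\mathrm{HS}}$ by the right-hand side of \eqref{eq:signal-stability}. It therefore suffices to prove, for unit vectors $x,y$,
\[
 \inf_{|\tau|=1}\norm{x-\tau y}_2\le\norm{xx^*-yy^*}_{\mathrm{HS}}.
\]
Put $c=|\ip xy|\in[0,1]$. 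Expanding the trace gives
\[
 \norm{xx^*-yy^*}_{\mathrm{HS}}^2=2-2c^2.
\]
On the other side,
\[
 \norm{x-\tau y}_2^2=2-2\operatorname{Re}\bigl(\overline\tau\ip xy\bigr),
\]
and choosing $\tau$ to align the phase of $\ip xy$ gives the infimum $2-2c$. Since $0\le c\le1$,
\[
 2-2c\le 2(1-c)(1+c)=2-2c^2,
\]
and taking square roots proves the claim.

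No step is a real obstacle. The only point needing care is that Proposition~\ref{prop:diagonalization} assumes a unit-norm window, which is why the statement restricts to the unit-normalized regularized windows. The case $x=\tau y$ is covered automatically, since both sides of \eqref{eq:signal-stability} then vanish.
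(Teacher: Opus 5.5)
Your proposal is correct and follows the paper's proof essentially verbatim: you obtain \eqref{eq:inverse-lifted} by rearranging the lower bound in Proposition~\ref{prop:diagonalization}, and \eqref{eq:signal-stability} by applying it to $X=xx^*-yy^*$ together with the comparison $2(1-|\ip xy|^2)\ge 2(1-|\ip xy|)$. Your explicit check that $\alpha(g)>0$ for the unit-normalized regularized windows is a harmless addition that the paper leaves implicit.
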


\begin{proof}
The first estimate is Proposition~\ref{prop:diagonalization}.  For unit
vectors,
\[
 \norm{xx^*-yy^*}_{\mathrm{HS}}^2
 =2(1-|\ip{x}{y}|^2)
 \ge2(1-|\ip{x}{y}|)
 =\inf_{|\tau|=1}\norm{x-\tau y}_2^2.
\]
Apply \eqref{eq:inverse-lifted} to $X=xx^*-yy^*$.
\end{proof}

\begin{remark}[Conditioning scales]
The universal construction in Corollary~\ref{cor:universal} gives inverse
constant $O(N)$ in \eqref{eq:inverse-lifted}.  The squarefree family in
Corollary~\ref{cor:squarefree}, with a bounded number of prime factors, gives
a dimension-independent inverse constant.  For the prime family in
Corollary~\ref{cor:near-sic-fullspark}, the squared singular values of
$\mathcal Q_{\widetilde g_p}$ on the traceless Weyl sector lie in the interval
\eqref{eq:near-sic-interval}.  Hence, for all sufficiently large $p$, its traceless-sector condition number is
\[
 \kappa_{\rm tr}(\mathcal Q_{\widetilde g_p})
 \le
 \sqrt{\frac{U_p+Cp^{-3/2}}{L_p-Cp^{-3/2}}}
 \longrightarrow1.
\]
The corresponding projector-Gram condition number is the square of this
quantity.
\end{remark}

\begin{remark}[Scope of the stability statement]
\label{rem:stability-scope}
The estimates above concern the complete $N^2$-measurement lifted operator
$\mathcal Q_g$.  Full spark is used here as a qualitative maximal
linear-independence property: we do not claim a uniform lower singular-value
bound for every $N\times N$ Gabor submatrix, nor a stability estimate for
arbitrary erasures of the Gabor orbit.  For general connections between full
spark and robustness to erasures, see \cite{Li2026}.
\end{remark}

\subsection{Schr\"odinger representatives}

Let $\pi_N$ denote the standard Schr\"odinger representation of the finite
Heisenberg group $\mathbb H_N$.  Its center acts by scalars and is therefore
contained in the projective kernel.  Following F\"uhr and Oussa, we use the
standard transversal
\[
 W=\{(k,\ell,0):k,\ell\in\Z_N\}
\]
of the center; the corresponding representative family is precisely the
Gabor system $\{M_\ell T_kg:(k,\ell)\in\Z_N^2\}$.

\begin{corollary}[Explicit simultaneous Schr\"odinger representatives]
\label{cor:schrodinger}
For every $N\ge2$ there is an explicit algebraic vector $g\in\C^N$ such
that the representative family $(\pi_N(w)g)_{w\in W}$ is full spark and
phase retrievable.  Moreover every nonzero $x\in\C^N$ has at most $N-1$
zeros among the $N^2$ representative coefficients
\[
 \ip{x}{M_\ell T_kg},
 \qquad (k,\ell)\in\Z_N^2,
\]
and this bound is attained.  Consequently
\[
 p_0(\pi_N,g)=\frac{N-1}{N^2},
\]
which is the sharp value $p_0(\pi_N)$ identified by F\"uhr and Oussa.
\end{corollary}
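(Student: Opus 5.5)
We take $g=H_N$ from Corollary~\ref{cor:universal}; any regularized window from Theorem~\ref{thm:regularization} would do equally well. Full spark and phase retrieval of the representative family are then immediate, because the transversal $W$ gives exactly the Gabor system $\mathcal G(g)$. The substance of the statement is the zero-count bound and its sharpness.

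\textbf{Upper bound.} Fix a nonzero $x\in\C^N$ and let
\[
 Z=\{(k,\ell):\ip{x}{M_\ell T_kg}=0\}.
\]
Suppose $|Z|\ge N$. Pick any $N$-element subset $\Lambda\subset Z$. By full spark, the vectors $\{M_\ell T_kg:(k,\ell)\in\Lambda\}$ form a basis of $\C^N$. Then $x$ is orthogonal to a basis, so $x=0$, a contradiction. Hence $|Z|\le N-1$ for every nonzero $x$.

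\textbf{Attainment.} Choose any $N-1$ distinct indices $\Lambda'\subset\Z_N^2$. By full spark, the $N-1$ vectors $M_\ell T_kg$ with $(k,\ell)\in\Lambda'$ are linearly independent, since any $N-1$ of them extend to an $N$-subset that is a basis. Their span is therefore a hyperplane. Let $x$ be a nonzero normal vector to it. Then $x$ has at least $N-1$ zero coefficients, and by the upper bound exactly $N-1$.

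\textbf{Value of $p_0$.} I would then recall the F\"uhr--Oussa definition of $p_0(\pi,g)$: the maximal proportion of vanishing representative coefficients over nonzero $x$, normalized by the size $N^2$ of the transversal. With that definition, the two counts give $p_0(\pi_N,g)=(N-1)/N^2$. F\"uhr and Oussa show that $(N-1)/N^2$ is a lower bound for $p_0(\pi_N,g)$ for every $g$, by the same dimension-counting argument, and that this value is the infimum $p_0(\pi_N)$. So our $g$ attains the sharp value.

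\textbf{Main obstacle.} The linear algebra is routine. The only delicate point is matching conventions with \cite{FuehrOussa2023}: their exact normalization of $p_0$, and the fact that the transversal $W$ yields exactly $\mathcal G(g)$ up to unimodular scalars. Scalars do not affect which coefficients vanish, so the zero counts are unchanged; I would state this explicitly.
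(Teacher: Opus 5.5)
Your proposal is correct and follows the paper's proof essentially step for step. You take $H_N$ from Corollary~\ref{cor:universal}, use full spark to exclude $N$ zeros (orthogonality to a basis), attain exactly $N-1$ zeros with a normal vector to the span of any $N-1$ representatives, and then invoke the F\"uhr--Oussa identification of this count with $p_0(\pi_N,g)$. Your added observation that the dimension count gives $p_0(\pi_N,g)\ge (N-1)/N^2$ for every $g$ makes the sharpness claim slightly more self-contained than the paper's bare citation.
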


\begin{proof}
Take the window from Corollary~\ref{cor:universal}.  Full spark and phase
retrieval hold for the representative Gabor family.  If a nonzero vector
$x$ were orthogonal to $N$ representatives, those $N$ vectors would be
linearly dependent, a contradiction.  Conversely, any $N-1$ representatives
are independent, so their orthogonal complement contains a nonzero vector;
full spark prevents that vector from being orthogonal to any additional
representative.  Hence the maximal zero proportion on $W$ is
$(N-1)/N^2$.  Remark~17 and Remark~18 of \cite{FuehrOussa2023} identify
this representative count with $p_0(\pi_N,g)$ and give the sharp value
$p_0(\pi_N)=(N-1)/N^2$.
\end{proof}

The distinction between the full Heisenberg group orbit and the representative
system is essential: central elements act by scalars, so the full group-indexed
orbit contains proportional repetitions and cannot be full spark in the sense
used in this paper.  F\"uhr and Oussa formulate their finite Schr\"odinger
full-spark discussion on such a system of representatives modulo the center
\cite[Remarks~17--18 and Concluding Remarks]{FuehrOussa2023}.  Thus
Corollary~\ref{cor:schrodinger} gives the explicit simultaneous
phase-retrieval/full-spark representative vector requested there and supplies
the sharp zero proportion used in their split-$K$-triple induction.

\medskip
The algebraic regularization theorem separates the determinant constraint
from the analytic or arithmetic problem of constructing a good ambiguity
seed.  The Chu seeds quantify what is available in every cyclic dimension and
how a balanced divisor improves the margin; CRT multiplicativity transfers
strong local seeds to composite dimensions; and near-SIC prime seeds remain
near-SIC after full spark is imposed.  Together these results give a single
deterministic route from quantitative ambiguity control to explicit
full-spark phase-retrieval Gabor windows.

\section*{Data availability}
No datasets were generated or analyzed in this study.

\section*{AI use statement}
OpenAI ChatGPT was used for literature search, proof verification, numerical
verification, and language polishing.  The author takes full responsibility for all
mathematical claims and the final manuscript.

\end{document}